\setlist{nolistsep}
\newtheorem{theorem}{Theorem}[section]
\newtheorem{lemma}[theorem]{Lemma}
\newtheorem{corollary}[theorem]{Corollary}
\newtheorem{definition}[theorem]{Definition}
\newenvironment{proof}{\noindent{\bf Proof}\hspace{0.5em}}
    { \null \hfill $\square$ \par}
\newcommand{\rbose}{\rrbracket}\newcommand{\lbose}{\llbracket}\newcommand{\rround}{\rrparenthesis}\newcommand{\lround }{\llparenthesis}
\newcommand{\scroll}{\mathscr S}
\newcommand{\VO}{\V(\Bo{\O})}
\newcommand{\VC}{\V(\Bo{\C})}
\newcommand{\VCplus}{\V(\Bo{\Cplus})}
\newcommand{\VB}{\V(\Bo{\pi})}
    \newcommand{\Fq}{\mathbb F_{q}}
\newcommand{\Fqqq}{\mathbb F_{q^3}}
\newcommand{\Fqq}{\mathbb F_{q^2}}
\newcommand{\Fqqqqqq}{\mathbb F_{q^6}}
\newcommand{\Fqh}{\mathbb F_{q^h}}
\newcommand{\plus}{{\raisebox{.2\height}{\scalebox{.5}{\textup{+}}}}}
\newcommand{\Cplus}{\C^{{\rm \pmb\plus}}}
\newcommand{\si}{\Sigma_\infty}
\renewcommand{\O}{\mathcal O}
   \newcommand{\ocpi}{\bar{\mathsf c}_\pi}
  \newcommand{\ocpis}{\bar{\mathsf c}^2_\pi}
   \newcommand{\ocb}{\bar{\mathsf c}_b}
\newcommand\T{{\cal T}}
\newcommand\U{{\cal U}}
\newcommand{\R}{\mathcal R}
\newcommand\C{{\cal C}}
\newcommand\V{{\mathscr V}}
\newcommand\I{{\cal I}}
\newcommand{\IB}{\I_{\mbox{\tiny\textup{Bose}}}}
\newcommand\A{{\cal A}}
\newcommand\F{{\cal F}}
\renewcommand{\S}{\mathcal S}
\newcommand{\K}{\mathcal K}
\newcommand{\Q}{\mathcal Q}
\renewcommand\setminus{\backslash}
\newcommand{\st}{\,|\,}
\newcommand\PGammaL{{\mbox{P$\Gamma$L}}}
\newcommand\PGL{{\rm PGL}}
\newcommand\PG{{\rm PG}}
\newcommand\Bo[1]{\mbox{$\lbose#1\rbose$}}
\newcommand{\sistar}{{\Sigma}^{\mbox{\tiny\ding{73}}}_{\mbox{\raisebox{.4\height}{\scalebox{.6}{$\infty$}}}}}
\newcommand{\Pirstar}{{\Pi}^{\mbox{\tiny\ding{73}}}_r}
\newcommand{\Pirstarstar}{{\Pi}^{\mbox{\tiny\ding{72}}}_r}
\newcommand{\PiVstar}{{\Pi}^{\mbox{\tiny\ding{73}}}_{\V}}
\newcommand{\PiPstar}{{\Pi}^{\mbox{\tiny\ding{73}}}_{\mbox{\raisebox{.4\height}{\scalebox{.5}{$P$}}}}}
\newcommand{\Sigmarstarstar}{{\Sigma}^{\mbox{\tiny\ding{72}}}_r}
\newcommand{\Qonestar}{{\Q}^{\mbox{\tiny\ding{73}}}_1}
\newcommand{\Qninestar}{{\Q}^{\mbox{\tiny\ding{73}}}_9}
\newcommand{\Qzerostar}{{\Q}^{\mbox{\tiny\ding{73}}}_0}
\newcommand{\Qtwostar}{{\Q}^{\mbox{\tiny\ding{73}}}_2}
\newcommand{\Qonestarstar}{{\Q}^{\mbox{\tiny\ding{72}}}_1}
\newcommand{\Qzerostarstar}{{\Q}^{\mbox{\tiny\ding{72}}}_0}
\newcommand{\Qtwostarstar}{{\Q}^{\mbox{\tiny\ding{72}}}_2}
\newcommand{\Vonestar}{{\V}^{\mbox{\tiny\ding{73}}}_1}
\newcommand{\Vtwostar}{{\V}^{\mbox{\tiny\ding{73}}}_2}
\newcommand{\Vthreestar}{{\V}^{\mbox{\tiny\ding{73}}}_3}
\renewcommand{\star}{{^{\mbox{\tiny\ding{73}}}}}
\newcommand{\blackstar}{{^{\mbox{\tiny\ding{72}}}}}
\newcommand{\Kstar}{{\K^{\mbox{\tiny\ding{73}}}}}
\newcommand{\Kstarstar}{{\K^{\mbox{\tiny\ding{72}}}}}
\newcommand{\Vstar}{{\V^{\mbox{\tiny\ding{73}}}}}
\newcommand{\Qstar}{{\Q^{\mbox{\tiny\ding{73}}}}}
\newcommand{\SC}{\mathscr S}
\newcommand\conjB[1]{{  #1^{\mathsf c_{\scalebox{0.5}{\mbox{$\pi$}}}}}}
\newcommand\conjBsq[1]{{  #1^{\mathsf c^2_{\pi}}}}
\newcommand{\Label}{\label}
\newcommand{\Pit}{\Gamma}
\begin{document}
%

\title{The Bose representation of $\PG(2,q^3)$ in $\PG(8,q)$}
\author{S.G. Barwick, Wen-Ai Jackson and Peter Wild}
\date{}
\maketitle
%
%
%
%
 Keywords:  Bose representation,  $\Fq$-subplanes, $\Fq$-sublines, conics, $\Fq$-conics
 
 AMS code: 51E20
%


\begin{abstract} This article looks at the Bose representation of $\PG(2,q^3)$ as a 2-spread of $\PG(8,q)$. It is shown that an $\Fq$-subline of $\PG(2,q^3)$ corresponds to a 2-regulus, and an $\Fq$-subplane corresponds to a Segre variety $\S_{2;2}$. Moreover, the extension of these varieties to $\PG(8,q^3)$ and $\PG(8,q^6)$ is determined. These are used to determine the structure of an $\Fq$-conic of $\PG(2,q^3)$ in the Bose representation in  $\PG(8,q)$. 
\end{abstract}

\section{Introduction}

  In \cite{Bose}, Bose gave a representation of $\PG(2,q^2)$ as a regular 1-spread in $\PG(5,q)$. It is straightforward to generalise this to represent $\PG(2,q^3)$ as a regular 2-spread in $\PG(8,q)$, and we examine this representation in detail. 
  In particular, we determine the representation of conics, $\Fq$-sublines, $\Fq$-subplanes and $\Fq$-conics of $\PG(2,q^3)$ in $\PG(8,q)$.  
  Moreover, we determine the extensions of the corresponding varieties to $\PG(8,q^3)$ and $\PG(8,q^6)$.   
   Our motivation in looking at   these extensions is to study the representation of $\Fq$-conics of $\PG(2,q^3)$ in the Bruck-Bose $\PG(6,q)$ representation. In particular, \cite{BJW-4} uses these extensions to characterise $\Fq$-conics of $\PG(2,q^3)$ as corresponding to certain  normal rational curves of $\PG(6,q)$.

  The article is set out as follows. In Section~\ref{sec:2}, we introduce the background and notation we use. In Section~\ref{sec:prelim}, we prove some preliminary results. First  we 
  need several results relating to disjoint planes in $\PG(8,q)$. Secondly, in Section~\ref{sec:scroll}, we carefully define a notion of 
 a scroll  in $\PG(8,q)$ ruled by three planar varieties.
  Section~\ref{sec3.4} is devoted to the variety associated with  a scroll ruled by three conics, and we determine the order and dimension of this variety.
   
 In Section~\ref{sec:coord}, we investigate coordinates for the Bose representation. 
 We calculate coordinates for the transversal planes of the regular 2-spread. We need a suitable description of certain points in $\PG(8,q^3)$, and we determine these by looking at the conjugacy map with respect to an $\Fq$-subplane. 
 
 Section~\ref{sec:var} looks at a variety of $\PG(2,q^3)$, and  uses coordinates to describe  the corresponding variety in $\PG(8,q)$. 
A geometric description of this  variety is given. 
 An application to  the representation of a non-degenerate conic of $\PG(2,q^3)$ in the Bose representation is given in 
Theorem~\ref{Fqqq-conic-Bose}. 

The machinery that has been developed in the article is then used to look at $\Fq$-structures of $\PG(2,q^3)$ in $\PG(8,q)$; and to  determine the extension of the resulting varieties to $\PG(8,q^3)$ and $\PG(8,q^6)$.  
Section~\ref{sec:lines-plane} determines the representation of $\Fq$-sublines and $\Fq$-subplanes of $\PG(2,q^3)$ in the Bose representation, and  Section~\ref{sec:conic} determines the representation of $\Fq$-conics of $\PG(2,q^3)$ in the Bose representation.

\section{Background and Notation}\Label{sec:2}

\subsection{Background}

We denote the unique finite field of prime power order $q$ by $\Fq$. An $\Fq$-subplane of $\PG(2,q^3)$ is a subplane that is projectively equivalent to $\PG(2,q)$. Similarly, an $\Fq$-subline is a subline  that is projectively equivalent to $\PG(1,q)$.

The Frobenius map $x\mapsto x^q$ for $x\in \mathbb F_{q^{h}}$ gives rise to an 
 automorphic collineation in $\PGammaL(n,q^h)$ of order $h$ acting on points of $\PG(n,q^{h})$
that fixes the points of $\PG(n,q)$ pointwise, that is
$
X=(x_0,\ldots,x_{n})\ \longmapsto  X^{q}=(x_0^q,\ldots,x_{n}^q).
$
We say the points $X,X^q\ldots,X^{q^h-1}$ are conjugate points with respect to the conjugacy map $X\mapsto X^q$.

We work with Segre varieties, see \cite[Section 4.5]{HT-new} for details. In particular, the Segre variety $\S_{2;2}$ in $\PG(8,q)$  contains two systems of  maximal subspaces (planes), denoted $\R$ and $\R'$, such that every plane in $\R$ meets every plane in $\R'$ in a point. 
A 2-{\em regulus} of $\PG(5,q)$ is the system of maximal 2-spaces
of a Segre variety ${\mathscr
  S}_{1;2}$, see \cite[Section 4.6]{HT-new}. 
  
  A 2-{\em spread} of $\PG(8,q)$ is a set of  planes that partition the points of 
$\PG(8,q)$.  
We use the following construction of a regular 2-spread of $\PG(3s+2,q)$, see \cite{CasseOKeefe}.
Embed $\PG(8,q)$ in $\PG(8,q^3)$ and 
consider the collineation  $X=(x_0,\ldots,x_{8}) \longmapsto 
X^{q}=(x_0^q,\ldots,x_{8}^q)$ acting on  $\PG(8,q^3)$. 
 Let $\Gamma $ be a plane in $\PG(8,q^3)$ which is disjoint from $\PG(8,q)$,  such that $\Gamma,\Gamma^{q},\Gamma^{q^{2}}$ span $\PG(8,q^3)$ (so any two span a 5-space which is disjoint from the third). 
For a point $X\in\Gamma$, the plane $\langle X,X^{q}, X^{q^2}\rangle$ of $\PG(8,q^3)$ meets $\PG(8,q)$ in a plane. The planes $\langle X,X^{q}, X^{q^2}\rangle\cap\PG(8,q)$ for $X\in\Gamma$ form a regular 2-spread of $\PG(8,q)$. The planes  $\Gamma$, $\Gamma^{q}$ and $\Gamma^{q^2}$ are called 
 the three {\em transversal spaces} of the 2-spread. Conversely, any regular  2-spread of $\PG(8,q)$ has a unique set of three  transversal $s$-spaces in $\PG(8,q^3)$, and can be constructed in this way, see \cite[Theorem 6.1]{CasseOKeefe}.  

\subsection{Variety-extensions}
A variety in $\PG(n,q)$ has a natural extension to a variety in the cubic extension $\PG(n,q^3)$ and to a variety in $\PG(n,q^6)$ (a sextic extension of $\PG(n,q)$, and a quadratic extension of $\PG(n,q^3)$).  If $\K$ is a variety of $\PG(n,q)$, then the pointset of $\K$ is the set of points of $\PG(n,q)$ satisfying  the set $\F=\{f_i(x_0,\ldots,x_n)=0, \ i=1,\ldots,k\}$ of $k$ homogeneous $\Fq$-equations $f_i$.
We define the  {\em variety-extension}  $\Kstar$ of $\K$ to $\PG(n,q^3)$, to be the set of points of $\PG(n,q^3)$ that satisfy the same set $\F$  of homogeneous equations as $\K$.  
  Similarly, we can define the {\em variety-extension}  $\Kstarstar$ of $\K$ to $\PG(n,q^6)$.
So if $\Pi_r$ is an $r$-dimensional subspace of $\PG(n,q)$, then $\Pirstar$ is the natural extension to an $r$-dimensional subspace of $\PG(n,q^3)$, and $\Pirstarstar$ is the extension to $\PG(n,q^6)$. Moreover, if $\Sigma_r$ is an $r$-dimensional subspace of $\PG(n,q^3)$ (possibly disjoint from $\PG(n,q)$), then $\Sigmarstarstar$ denotes the extension to $\PG(n,q^6)$. 
In this article we use the $\star$ and $\blackstar$ notation for varieties in the  Bose representations, that is, when $n=8$. We do not use the $\star$ and $\blackstar$ notation in $\PG(2,q^3)$.

\subsection{The Bose representation of $\PG(2,q^3)$ in $\PG(8,q)$}

Bose \cite{Bose} gave a construction to represent $\PG(2,q^2)$ using a regular 1-spread in $\PG(5,q)$. This construction generalises to the Bose representation of $\PG(2,q^h)$ using a regular  $(h-1)$-spread in $\PG(3h-1)$. We consider $h=3$, that is, the Bose representation of $\PG(2,q^3)$ using a regular  2-spread $\mathbb S$  in $\PG(8,q)$. 
 Let $\IB$ be the incidence structure with {\em points} the $q^6+q^3+1$  planes of  ${\mathbb S}$; {\em lines} the  5-spaces of $\PG(8,q)$ that meet ${\mathbb S}$ in $q^3+1$ planes; and {\em incidence} is inclusion. 
The  
5-spaces of $\PG(8,q)$ that meet ${\mathbb S}$ in $q^3+1$ planes form a dual spread $\mathbb H$  (that is, each 7-space of $\PG(8,q)$ contains a unique 5-space in $\mathbb H$). 
  Then $\IB\cong\PG(2,q^3)$, and this representation is called the {\em Bose representation} of $\PG(2,q^3)$ in $\PG(8,q)$. 
 The regular 2-spread $\mathbb S$ has three conjugate transversal planes which we denote throughout this article by $\Pit$, $\Pit^q$, $\Pit^{q^2}$. Note that $\IB\cong\Pit \cong\PG(2,q^3)$.

We use the following notation. 
 Let $\bar X$ be a point of $\PG(2,q^3)$, then the Bose representation of $\bar X$ is  a plane of $\mathbb S$ denoted by $\Bo{X}$. 
 In $\PG(8,q^3)$, we have $\Bo{X}\star\cap\Pit=X$ and  $\Bo{X}\star=\langle  X, X^q,X^{q^2}\rangle$. Thus 
 $\bar X$ corresponds to a unique point $X$ of $\Pit$, and the points of $\Pit$
 and $\PG(2,q^3)$ are in one-to-one correspondence. 
 
  More generally, if $\bar\K$ is a set of points of $\PG(2,q^3)$, then $\lbose \K\rbose=\{\Bo{X}\st \bar X\in\bar \K\}$ denotes the corresponding set of planes in the Bose representation in $\PG(8,q)$, and $\K=\{\Bo{X}\star\cap\Pit\st \bar X\in\bar \K\}$ denotes the corresponding set of points of $\Pit$ .

 So we have the following correspondences:
 $$
\begin{array}{ccccccccccc}
\PG(2,q^3)&\cong&\Pit&\cong&\IB\\
\bar X &\longleftrightarrow&
X
&\longleftrightarrow&
\Bo{X}=\langle  X, X^q,X^{q^2}\rangle\cap\PG(8,q).
\end{array}$$

\subsection{Notation summary}

 \begin{itemize}
  \item [$\bullet$] For a variety $\K$ in $\PG(n,q)$, $n>2$, denote the variety-extension to $\PG(n,q^3)$ by $\Kstar$, and the extension to $\PG(n,q^6)$ by $\K\blackstar$. 
  
  \item [$\bullet$] For a point $X=(x_0,\ldots,x_{n})$ of $\PG(n,q^t)$, let 
$X^{q}=(x_0^q,\ldots,x_{n}^q).$

  \item [$\bullet$] $\mathbb S$ is a regular 2-spread in $\PG(8,q)$.
  
   \item [$\bullet$] $\mathbb S$ has three transversal planes in $\PG(8,q^3)$, denoted $\Pit$, $\Pit^q$, $\Pit^{q^2}$.

  \item [$\bullet$] A point $\bar X$ in $\PG(2,q^3)$ corresponds to a point $X$ in the transversal plane $\Pit$, and to a plane $\Bo{X}=\langle  X, X^q,X^{q^2}\rangle\cap\PG(8,q)$ of $\mathbb S$.

\end{itemize}

 \section{Preliminary Results}\Label{sec:prelim}
  
  \subsection{Planes of $\PG(8,q)$}
  
  \begin{lemma}\Label{lem:planes1} 
  Let $\alpha,\beta,\gamma$ be three planes which span $\PG(8,q)$. Let $P$ be a point of $\PG(8,q)$ not on a line meeting two of $\alpha,\beta,\gamma$.
  Then $P$ lies on a unique plane that meets each of $\alpha,\beta,\gamma$.
  \end{lemma}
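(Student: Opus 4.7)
The hypothesis translates to $P \notin \langle \alpha, \beta\rangle \cup \langle \alpha, \gamma\rangle \cup \langle \beta, \gamma\rangle$, since the points on lines meeting both $\alpha$ and $\beta$ fill out exactly the 5-space $\langle \alpha, \beta\rangle$ (every $Q \in \langle\alpha,\beta\rangle$ can be written as $A+B$ with $A\in\alpha$, $B\in\beta$, giving a transversal line through $Q$). In particular $P$ lies in none of the three planes.

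For uniqueness, the 6-space $\sigma = \langle P, \alpha, \beta\rangle$ together with $\gamma$ spans $\PG(8,q)$, so by the dimension formula $\sigma \cap \gamma$ is a single point $C$. Any candidate plane $\pi'$ through $P$ meets each of $\alpha, \beta, \gamma$ in exactly one point: a line of intersection with $\alpha$, for instance, would force $\pi' \subseteq \langle P, \alpha\rangle$, a 3-space that is disjoint from $\beta$ by dimension. Writing $A = \pi' \cap \alpha$ and $B = \pi' \cap \beta$, we get $\pi' = \langle P, A, B\rangle \subseteq \sigma$, hence $\pi' \cap \gamma = \{C\}$. By symmetry, $A$ and $B$ are also forced as $\langle P, \beta, \gamma\rangle \cap \alpha$ and $\langle P, \alpha, \gamma\rangle \cap \beta$.

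For existence, I would start from $C \in \sigma$ and write $v_C = \lambda v_P + a + b$ with $a$, $b$ in the vector subspaces of $\alpha$, $\beta$. Here $\lambda \neq 0$ (else $v_C \in \langle \alpha, \beta\rangle$, contradicting $\gamma \cap \langle \alpha, \beta\rangle = \emptyset$), and after rescaling to $\lambda = 1$, both $a \neq 0$ and $b \neq 0$: for example, $a = 0$ gives $v_C - v_P = b \in \beta$, so line $PC$ meets both $\beta$ and $\gamma$, contradicting the hypothesis. Setting $A = [-a] \in \alpha$ and $B = [-b] \in \beta$, the relation $v_P = v_A + v_B + v_C$ yields $P \in \langle A, B, C\rangle$; noncollinearity of $A, B, C$ follows again from $\gamma \cap \langle \alpha, \beta\rangle = \emptyset$, so $\pi' := \langle A, B, C\rangle$ is the desired plane.

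The main obstacle is the coplanarity step in existence: verifying that the three canonical points $A, B, C$ produced by the symmetric dimension arguments actually lie on a common plane through $P$. This is precisely where the full strength of the hypothesis enters, each of the three pairwise-span exclusions ruling out one of the degenerate cases $\lambda = 0$, $a = 0$, or $b = 0$ in the vector relation for $v_C$.
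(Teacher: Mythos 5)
Your proof is correct, and both halves differ in a worthwhile way from the paper's argument. For existence, the paper proceeds purely by iterated dimension counts: it intersects $\langle P,\alpha,\beta\rangle$ with $\gamma$ to get a point $Q$, then $\langle P,Q,\alpha\rangle$ with $\beta$ to get $R$, then checks $\langle P,Q,R\rangle$ meets $\alpha$; you instead decompose a representative vector of $C=\langle P,\alpha,\beta\rangle\cap\gamma$ as $v_P+a+b$ and read off the plane directly. The two are equivalent in substance (your $C$ is the paper's $Q$), but your version makes explicit \emph{why} the transversal plane through $P$ exists and exactly where each clause of the hypothesis is used ($\lambda\neq 0$, $a\neq0$, $b\neq0$), which the paper leaves implicit. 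For uniqueness the routes genuinely diverge: the paper supposes two transversal planes through $P$, looks at the $3$- or $4$-dimensional span of the pair, and counts the six intersection points to contradict $\langle\alpha,\beta,\gamma\rangle=\PG(8,q)$; you show instead that each of the three intersection points of any candidate plane is individually forced, e.g.\ $\pi'\cap\gamma$ must be the single point $\langle P,\alpha,\beta\rangle\cap\gamma$. Your uniqueness argument is shorter and avoids the slightly delicate case analysis on $\dim\langle\pi_1,\pi_2\rangle$; the paper's has the mild advantage of never needing the observation that $\pi'$ meets each plane in exactly one point. Your opening reformulation of the hypothesis as $P\notin\langle\alpha,\beta\rangle\cup\langle\alpha,\gamma\rangle\cup\langle\beta,\gamma\rangle$ is also a useful clarification that the paper does not state.
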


\begin{proof} 
Let $P$ be a point of  $\PG(8,q)$ which is not on a line joining two of $\alpha,\beta,\gamma$.
So $\langle P,\alpha\rangle$ is a 3-space  that does not meet $\beta$ or $\gamma$. Hence $\Sigma_6=\langle P,\alpha,\beta\rangle$ is a 6-space. As $\alpha,\beta,\gamma$ span $\PG(8,q)$, $\Sigma_6$ meets  $\gamma$ in a point we denote by $Q$. So $\Sigma_4=\langle P,Q,\alpha\rangle$ is a 4-space contained in $\Sigma_6$. As $\alpha,\beta$ span a 5-space contained in $\Sigma_6$,  $\Sigma_4$ meets $\beta$ in a point denoted $R$. The two planes $\pi=\langle P,Q,R\rangle$ and $\alpha$ lie in  the 4-space $\Sigma_4$, and so meet in a point $S$. That is, $\pi$ meets $\alpha$ in the point $S$, meets $\beta$ in the point $R$ and meets $\gamma$ in the point $Q$. That is, $P$ lies on at least one plane that meets each of $\alpha,\beta,\gamma$.

Suppose $P$  lies in two distinct planes $\pi_1,\pi_2$  that meet each of $\alpha,\beta,\gamma$, so $\langle \pi_1,\pi_1\rangle$ has dimension 3 or 4. Consider the set of six (possibly repeated)  points $\K=\{\pi_i\cap\alpha,\,\pi_i\cap\beta,\,\pi_i\cap\gamma\st i=1,2\}$. 
Suppose $\langle \pi_1,\pi_2\rangle$ is a 3-space, so $\pi_1\cap\pi_2=\ell$ is a line. As $P\in\ell$, by assumption at most one of the planes $\alpha,\beta,\gamma$ meets $\ell$. Hence $|\K|\geq 5$, so the 3-space $\langle \pi_1,\pi_2\rangle$ meets two of the planes $\alpha,\beta,\gamma$  in a line, and meets the other in at least a point. This contradicts the three  planes $\alpha,\beta,\gamma$  spanning $\PG(8,q)$. 
If  $\langle \pi_1,\pi_2\rangle$ is a 4-space, then $|\K|=6$, and so $\langle \pi_1,\pi_2\rangle$ meets each of $\alpha,\beta,\gamma$ in a line, contradicting the  three  planes $\alpha,\beta,\gamma$  spanning $\PG(8,q)$. Hence $P$ lies 
 on at most one plane  that meets each of $\alpha,\beta,\gamma$. We conclude that $P$ lies on exactly one plane that meets each of $\alpha,\beta,\gamma$.
 \end{proof}

 \begin{lemma}\Label{lem:HT2} Let $\alpha,\beta,\gamma,\delta$ be four planes of $\PG(8,q)$, such that any three span $\PG(8,q)$. 
 Then $\alpha,\beta,\gamma,\delta$ are contained in a unique Segre variety $\S_{2;2}$. 
  \end{lemma}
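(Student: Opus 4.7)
The plan is to use Lemma~\ref{lem:planes1} to construct a candidate ``opposite'' ruling, verify existence of the Segre variety by reducing to a standard coordinate form, and derive uniqueness from the same application of Lemma~\ref{lem:planes1}.

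First I would unpack the hypothesis dimensionally: if any three of the four planes span $\PG(8,q)$, then by the Grassmann formula each plane is disjoint from the $5$-space spanned by any two of the others, and in particular the four planes are pairwise disjoint. For each point $P\in\alpha$ I then apply Lemma~\ref{lem:planes1} to the triple $\beta,\gamma,\delta$: since $P\notin\langle\beta,\gamma\rangle\cup\langle\beta,\delta\rangle\cup\langle\gamma,\delta\rangle$, that lemma yields a unique plane $\pi_P$ through $P$ meeting each of $\beta,\gamma,\delta$ in a single point. Setting $\R'=\{\pi_P:P\in\alpha\}$ produces $q^2+q+1$ planes each of which meets every one of $\alpha,\beta,\gamma,\delta$ in a point, exactly matching the combinatorics of the opposite ruling of an $\S_{2;2}$.

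To see that $\R'$ really arises from a Segre variety I would introduce coordinates. Since $\alpha,\beta,\gamma$ are disjoint and span $\PG(8,q)$ I can choose a frame in which they are the three standard $3$-block coordinate planes; the disjointness of $\delta$ from every pairwise span then lets me parametrise $\delta$ as $\{(a,f(a),g(a)):a\in\Fq^3\}$ for some $f,g\in\mathrm{GL}_3(\Fq)$, and a further change of basis inside $\beta$ and $\gamma$ reduces this to $\delta=\{(a,a,a):a\in\Fq^3\}$. Identifying $\PG(8,q)$ with the projectivisation of the $3\times 3$ matrix space, the locus of rank-one matrices is a Segre $\S_{2;2}$; it contains $\alpha,\beta,\gamma$ tautologically and contains $\delta$ because $(a,a,a)$ is the outer product $(1,1,1)^{T}(a_0,a_1,a_2)$.

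The main obstacle is uniqueness. For this I would use that in an $\S_{2;2}$ two planes of the same ruling are disjoint while two planes of opposite rulings meet in exactly one point; hence any Segre variety containing our four pairwise-disjoint planes must place all four in a single ruling. Any plane $\pi$ of the opposite ruling then meets each of $\alpha,\beta,\gamma,\delta$ in a point, and with $P=\pi\cap\alpha$, Lemma~\ref{lem:planes1} applied to $\beta,\gamma,\delta$ forces $\pi=\pi_P$. Thus the opposite ruling must coincide with $\R'$, which pins down the Segre variety uniquely.
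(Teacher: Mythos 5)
Your proposal is correct and follows essentially the same route as the paper: both use Lemma~\ref{lem:planes1} to build the unique opposite ruling $\{\pi_P : P\in\alpha\}$, then coordinatise $\alpha,\beta,\gamma$ as the three block-coordinate planes with $\delta$ the diagonal plane to recognise the Segre variety $\S_{2;2}$. Your uniqueness step is just a slightly more explicit version of the paper's observation that the set of transversal planes is uniquely determined.
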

 
 \begin{proof}
By Lemma~\ref{lem:planes1},
through each point $P_i\in\alpha$, there is a plane $\pi_i$ that meets each of $\beta,\gamma,\delta$ in a point. 
As  $\beta,\gamma,\delta$  span $\PG(8,q)$, the planes $\pi_i$, $i=1,\ldots,q^2+q+1$  are pairwise disjoint. 
Hence there is a unique set $\R=\{\pi_1,\ldots,\pi_{q^2+q+1}\}$ of $q^2+q+1$ pairwise disjoint planes that each meet $\alpha,\beta,\gamma,\delta$ in a point. 
We show that  $\R$ is one set of maximal spaces of a Segre variety $\S_{2;2}$. 

 Let $U_i$, $i=0,\ldots,8$,  be the point of $\PG(8,q)$ with a 1 in entry $i$, and zeroes elsewhere. A generalisation of the proof that three pairwise disjoint lines in $\PG(3,q)$ lie in a unique regulus (see for example \cite[Theorem 3.5]{UnitalBook}) shows that 
  without loss of generality, we can coordinatise so that 
   $\alpha=\langle U_0,U_1,U_2\rangle$, $\beta=\langle U_3,U_4,U_5\rangle$, $\gamma=\langle U_6,U_7,U_8\rangle$, and 
  $\delta=\langle D_0,D_1,D_2\rangle$ with 
  $D_0=(1,0,0,1,0,0,1,0,0) $, $D_1=(0,1,0,0,1,0,0,1,0)$, $  
 D_2=(0,0,1,0,0,1,0,0,1).$
 Let $A\in\alpha$,  so $A=(a_0,a_1,a_2,0,0,0,0,0,0)$ for some $a_0,a_1,a_2\in\Fq$, not all zero. 
 Let $\pi_A$ be the unique plane of $\R$ through $A$. Let $B=\pi_A\cap\beta$, $C=\pi_A\cap\gamma$  and  $D=\pi_A\cap\delta$. So $B=(0,0,0,b_0,b_1,b_2,0,0,0)$ for some $b_0,b_1,b_2\in\Fq$, not all zero, and  $C=\pi_A\cap\gamma=(0,0,0,0,0,0,c_0,c_1,c_2)$ for some $c_0,c_1,c_2\in\Fq$, not all zero.
As $D=\pi_A\cap\delta$, we have $$D=r_0A+r_1B+r_2C=s_0D_0+s_1D_1+s_2D_2$$ for some 
$r_0,r_1,r_2,s_0,s_1,s_2\in\Fq$. 
That is, $$(r_0a_0,r_0a_1,r_0a_2,r_1b_0,r_1b_1,r_1b_2,r_2c_0,r_2c_1,r_2c_2)=(s_0,s_1,s_2,s_0,s_1,s_2,s_0,s_1,s_2)$$
and so $B=(0,0,0,a_0,a_1,a_2,0,0,0)$ and $D=(r_0a_0,r_0a_1,r_0a_2,\ r_1a_0,r_1a_1,r_1a_2,\ r_2a_0,r_2a_1,r_2a_2)$.  
Hence as in \cite[page 209]{HT-new}, $\pi_A=\langle A,B,D\rangle$ is a plane in one maximal system of a Segre variety $\S_{2;2}$.
\end{proof}

\subsection{T-points, T-lines and T-planes}

We need some properties of  planes of $\PG(8,q^3)$ that meet all three transversal planes $\Pit$,    $\Pit^q$, $\Pit^{q^2}$ of the Bose spread $\mathbb S$. We call a plane  of $\PG(8,q^3)$ that meets all three transversal planes a \emph{T-plane};  call a line  of $\PG(8,q^3)$ that meets two transversal planes a \emph{T-line}; and call a point of $\PG(8,q^3)$  that lies in a  transversal plane a \emph{T-point}.

\begin{lemma}\Label{lem:hypcong} 
Two T-planes  of \ $\PG(8,q^3)$   are either equal, disjoint, meet in  a T-point, or meet in a T-line.  
\end{lemma}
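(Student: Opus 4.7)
The plan is to first reduce to the setting where every T-plane meets each transversal in exactly one point, and then to case-analyse an intersection point $P\in\pi_1\cap\pi_2$ via Lemma~\ref{lem:planes1}. For the reduction: two of $\pi\cap\Pit$, $\pi\cap\Pit^q$, $\pi\cap\Pit^{q^2}$ cannot both be lines, since two lines in the plane $\pi$ must meet while the transversals are pairwise disjoint; and a line in one transversal, say $\pi\cap\Pit$, paired with points in the other two would force $\pi\subseteq\langle\Pit,\Pit^q\rangle$, so $\pi\cap\Pit^{q^2}\subseteq\langle\Pit,\Pit^q\rangle\cap\Pit^{q^2}=\emptyset$, contradicting that $\pi$ is a T-plane. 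Hence every T-plane is of the form $\pi=\langle X,Y,Z\rangle$ with T-points $X\in\Pit$, $Y\in\Pit^q$, $Z\in\Pit^{q^2}$.

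Now suppose $\pi_1\neq\pi_2$ and $\pi_1\cap\pi_2\neq\emptyset$, and take $P\in\pi_1\cap\pi_2$. Since $P$ lies on two distinct T-planes, the contrapositive of Lemma~\ref{lem:planes1} forces $P$ to be either a T-point or to lie on a line meeting two transversals---equivalently, $P$ lies in one of the 5-spaces $\langle\Pit,\Pit^q\rangle$, $\langle\Pit,\Pit^{q^2}\rangle$, $\langle\Pit^q,\Pit^{q^2}\rangle$. Consider first the case that $P$ lies on a T-line but is not a T-point; without loss of generality $P\in\langle\Pit,\Pit^q\rangle\setminus(\Pit\cup\Pit^q)$. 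For each $i$ we have $\pi_i\not\subseteq\langle\Pit,\Pit^q\rangle$ (else $Z_i\in\Pit^{q^2}\cap\langle\Pit,\Pit^q\rangle=\emptyset$), so $\pi_i\cap\langle\Pit,\Pit^q\rangle$ is exactly a line; containing $X_i,Y_i,P$, it coincides with the T-line $\langle X_i,Y_i\rangle$. A dimension count inside $\langle\Pit,\Pit^q\rangle$---the 3-space $\langle P,\Pit\rangle$ meets $\Pit^q$ in a single point---gives uniqueness of the T-line through $P$ meeting $\Pit$ and $\Pit^q$, so $\langle X_1,Y_1\rangle=\langle X_2,Y_2\rangle$. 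This common T-line lies in $\pi_1\cap\pi_2$, and distinct planes sharing a line intersect precisely in that line.

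In the remaining case $P$ is a T-point, say $P\in\Pit$, so $X_1=X_2=P$. If $\pi_1\cap\pi_2=\{P\}$ the intersection is the T-point $P$. Otherwise pick $Q\in\pi_1\cap\pi_2\setminus\{P\}$: if $Q$ is also a T-point then $Q\in\Pit^q\cup\Pit^{q^2}$ (since $\pi_i\cap\Pit=\{P\}$), forcing $Y_1=Y_2=Q$ or $Z_1=Z_2=Q$, so $\langle P,Q\rangle$ is a T-line contained in, hence equal to, $\pi_1\cap\pi_2$; if $Q$ is not a T-point, the previous sub-case applied to $Q$ identifies $\pi_1\cap\pi_2$ as a T-line.

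The main obstacle I anticipate is the uniqueness of a T-line through a non-T-point of $\langle\Pit,\Pit^q\rangle$; this rests on $\langle\Pit,\Pit^q\rangle$ being disjoint from $\Pit^{q^2}$ and on $\Pit,\Pit^q$ being disjoint inside this 5-space, so that the relevant dimension counts yield exact equality rather than inequalities. Careful bookkeeping of the case split is otherwise the only subtlety.
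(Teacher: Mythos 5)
Your proof is correct and rests on the same pivot as the paper's: Lemma~\ref{lem:planes1} shows a point on two distinct T-planes must be a T-point or lie on a T-line. The paper compresses everything after that observation into ``the result follows,'' whereas you supply the missing bookkeeping (each T-plane meets each transversal in exactly one point, uniqueness of the T-line through a non-T-point of $\langle\Pit,\Pit^q\rangle$, and the case split on the nature of the intersection), all of which checks out.
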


\begin{proof} 
Let $P$ be a point of  $\PG(8,q^3)$ which is not a T-point, and does not lie on a T-line. 
As $P$ is not on a  T-line, by Lemma~\ref{lem:planes1}, 
 $P$ lies on a unique T-plane.
The result follows. \end{proof}

 \subsection{Scrolls}\Label{sec:scroll}
 
The notion of a   scroll that rules two normal rational curves according to a projectivity in $\PGL(2,q)$  is well known.
Here, we are interested in scrolls which rule \emph{three} planar varieties, that is, scrolls that are ruled by planes. 
We give a general definition of scrolls.

Let $r,d$ be positive integers. For $i=0,\dots,r$, let $n_i$ be a positive integer and let $\V_i$ be a variety of $\PG(n_i,q)$ of dimension $d$. Suppose that there exists a primal $\V$ of $\PG(d+1,q)$ such that, for $i=0,\dots,r$, $\V_i$ is in one-to-one correspondence with $\V$ under a birational mapping  $\psi_i\colon\V \rightarrow \V_i$. Let $\phi_i$, $i=1,\dots,r$ be projectivities of $\PG(d+1,q)$ fixing $\V$.
Consider $\PG(n,q)$ with $n=n_0+\cdots+n_r +r-1$ and suppose that $\V_0,\dots,\V_{r}$ are varieties contained in mutually disjoint subspaces of $\PG(n,q)$. For each point $P \in \V_0$ let $\Pi_P$ be the $r$-dimensional subspace $\langle P,P^{\psi_0^{-1}\phi_1\psi_1},\dots,P^{\psi_0^{-1}\phi_{r}\psi_{r}}\rangle$ (that is, the span of the point of each subspace corresponding to $P$). Define  $\scroll=\scroll(\V_0,\dots,\V_{r})$ to be the set of subspaces $\{\Pi_{P} | P \in \V_0 \}$. We say $\scroll$ is a \emph{scroll of type $(r,d)$} or an \emph{$(r,d)$-scroll.}

\emph{Example 1. Normal Rational Curves} 
 Let $r=1,d=1$. Suppose that $\V_0$ and $\V_1$ are normal rational curves of $\PG(n_0,q)$ and $\PG(n_1,q)$ respectively. For $i=0,1$, let $$\V_i=\{(s^{n_i},s^{n_i-1}t,\dots,st^{n_i-1},t^{n_i}) | s,t \in \Fq \}$$ and let $\psi_i$ be the mapping from the line $z=0$ of $\PG(2,q)$ to $\V_i$ given by $$\psi_i\colon(s,t,0) \longmapsto (s^{n_i},s^{n_i-1}t,\dots,st^{n_i-1},t^{n_i})$$ for $s,t\in\Fq$. Embedding $\V_0$ and $\V_1$ in independent subspaces of $\PG(n_0+n_1+1,q)$ and taking $\phi$ to be the identity we have that the collection $\scroll(\V_0,\V_1)$ of lines
$$\{(as^{n_0},as^{n_0-1}t,\dots,ast^{n_0-1},at^{n_0},bs^{n_1},bs^{n_1-1}t,\dots,bst^{n_1-1},bt^{n_1}) | a,b \in \Fq \}$$
 for $s,t \in \Fq$, $(s,t)\not=(0,0)$, is a scroll of type $(1,1)$. 
 It is well known that $\scroll(\V_0,\V_1)$ is a variety of dimension $2$. We can see this using techniques of \cite{SR} as follows. 
Supposing $n_1 \ge n_0$, the pointset of $\scroll(\V_0,\V_1)$ may also be written as the set of points $$(as^{n_1},as^{n_1-1}t,\dots,as^{n_1-n_0+1}t^{n_0-1},as^{n_1-n_0}t^{n_0},bs^{n_1},bs^{n_1-1}t,\dots,bst^{n_1-1},bt^{n_1}) $$ for $ a,b,s,t \in \Fq, s\not=0$, together with  $(0,\dots,0,at^{n_1},0,\dots,0,bt^{n_1})$ for $ t,a,b \in \Fq, t\not=0.$
 This pointset  is in one-to-one algebraic correspondence with the hyperbolic quadric 
 \begin{eqnarray*}
 V(x_0x_3-x_1x_2)&=&\{(s,t,a,b) | s,t,a,b \in \Fq, sb=ta \}\\
 &=&\{(sa,sb,ta,tb) | s,t,a,b \in \Fq \}
 \end{eqnarray*} of $\PG(3,q)$ since they are both in one-to-one correspondence with $$\PG(1,q) \times \PG(1,q) = \{\big((s,t),(a,b)\big) | s,t,a,b \in \Fq \}.$$ Hence the pointset is a variety of dimension 2.

\subsection{Scroll-extensions}\label{sec3.4}

We will be working with $(2,2)$-scrolls in $\PG(8,q)$, and define a natural extension to $\PG(8,q^3)$ and $\PG(8,q^6)$. 

First consider a variety $\V$  in a plane $\Pi_\V$ of $\PG(8,q)$, so the points of $\V$ satisfy a set $\F$ of equations in $\Pi_\V$. Note that $\V$ is also a variety of $\PG(8,q)$, it consists of the points that satisfy both the equations in $\F$  and   the equations defining $\Pi_\V$. We define the extension of $\V$ to $\PG(8,q^3)$, denoted $\V\star$, to be the set of points in the extended plane $\PiVstar$ that satisfy the same set of equations that define $\V$. 
   
   Suppose $\scroll(\V_1,\V_2,\V_3)$ is a $(2,2)$-scroll in $\PG(8,q)$. 
The    birational mappings  $\psi_1,\psi_2,\psi_3$  and the projectivities $\phi_1$, $\phi_2$  associated with $\scroll(\V_1,\V_2,\V_3)$ act on points of the form $(x,y,z)$, $x,y,z\in\Fq$. They have unique extensions to act on points of the form $(x,y,z)$, $x,y,z\in\Fqqq$. 
For $P\in\Vonestar$, let    $\PiPstar$ be the plane $\langle P,P^{\psi_0^{-1}\phi_1\psi_1},P^{\psi_0^{-1}\phi_2\psi_2}\rangle$. 
The \emph{scroll-extension} of $\scroll(\V_1,\V_2,\V_3)$ to $\PG(8,q^3)$ is a $(2,2)$-scroll $\scroll(\Vonestar,\Vtwostar,\Vthreestar)$
consisting of the  set of subspaces $\{\PiPstar | P \in \Vonestar \}$.
   Similarly, we can define the scroll-extension  to $\PG(8,q^6)$ (and similarly can define   the scroll-extension of a scroll from $\PG(5,q)$ to $\PG(5,q^3)$ and $\PG(5,q^6)$).

  \subsection{A scroll ruling three conics}
  
  We now  show that the points of a scroll of $\PG(8, q)$ that rules three conics forms a variety,  and determine the order and dimension using techniques given in Semple and Roth \cite[I.4]{SR}.

\begin{lemma}\Label{conic-v63} 
 In $\PG(8,q)$, let $\Pi$, $\Pi'$, $\Pi''$  be three disjoint planes which together span $\PG(8,q)$.  Let $\C$, $\C'$, $\C''$ be non-degenerate conics in $\Pi,\Pi',\Pi''$ respectively. Then the pointset of any  scroll $\scroll(\C,\C',\C'')$ form 
 a variety of dimension 3 and order 6.
\end{lemma}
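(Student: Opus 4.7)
The plan is to give the scroll an explicit coordinate parametrisation, recognise its pointset as the image of a Segre--Veronese map from $\PG(1,q)\times\PG(2,q)$ into $\PG(8,q)$, and then read off dimension and order using techniques of Semple and Roth \cite{SR} as in Example~1. Since $\Pi,\Pi',\Pi''$ are mutually disjoint planes spanning $\PG(8,q)$, I would choose coordinates so that $\Pi=\langle U_0,U_1,U_2\rangle$, $\Pi'=\langle U_3,U_4,U_5\rangle$, $\Pi''=\langle U_6,U_7,U_8\rangle$. Every non-degenerate conic is projectively equivalent to the Veronese conic, so after adjusting bases within $\Pi'$ and $\Pi''$ to absorb the projectivities $\phi_1,\phi_2$ from the scroll definition, one may assume
\[
\C=\{(s^2,st,t^2,0,\ldots,0)\},\ \C'=\{(0,0,0,s^2,st,t^2,0,0,0)\},\ \C''=\{(0,\ldots,0,s^2,st,t^2)\},
\]
for $(s,t)\in\PG(1,q)$, with corresponding points of the three conics sharing the parameter $(s,t)$. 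The plane $\Pi_P$ of the scroll through $P=(s^2,st,t^2,0,\ldots,0)\in\C$ then has general point
\[
\mu((s,t),(a,b,c))=(as^2,ast,at^2,bs^2,bst,bt^2,cs^2,cst,ct^2),\quad (a,b,c)\in\PG(2,q),
\]
so the pointset of $\scroll(\C,\C',\C'')$ coincides with the image of the map $\mu\colon\PG(1,q)\times\PG(2,q)\to\PG(8,q)$.

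Next I would check that $\mu$ is a bijection onto its image: each of the three non-zero consecutive triples of coordinates recovers $(s,t)$ up to scalar, and the scalar ratios between the three triples recover $(a,b,c)$ up to scalar. In particular the planes $\Pi_P$ are pairwise disjoint, and the scroll is a variety of dimension $3$. For the order, I would intersect with a generic $5$-space cut out by three general linear forms on $\PG(8,q)$. Pulling back through $\mu$, each linear form becomes a bihomogeneous form of bidegree $(2,1)$ in $((s,t),(a,b,c))$, and following the Semple--Roth method the number of common zeros on $\PG(1,q)\times\PG(2,q)$ equals the intersection number in the Chow ring generated by hyperplane classes $h_1,h_2$ with $h_1^2=0$ and $h_2^3=0$. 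Each form has class $2h_1+h_2$, and
\[
(2h_1+h_2)^3 = 8h_1^3+12h_1^2 h_2+6 h_1 h_2^2+h_2^3 = 6\,h_1 h_2^2,
\]
giving $6$ intersection points; hence the scroll has order $6$.

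The main obstacle is the normalisation step: verifying that the freedoms available in the scroll definition---rechoice of bases in $\Pi'$ and $\Pi''$, together with composition of the $\psi_i$ with projectivities of the template conic---suffice to simultaneously bring the three conics into Veronese form and trivialise $\phi_1,\phi_2$. This reduces to the classical fact that projectivities of a plane fixing a non-degenerate conic act as the full group $\PGL(2,q)$ on the conic, which provides precisely the flexibility required. Once the parametrisation is in place, the bijection check and the Chow-ring intersection count are routine.
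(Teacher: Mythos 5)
Your proposal is correct, and the setup is the same as the paper's: both coordinatise the three conics as Veronese conics in $\langle U_0,U_1,U_2\rangle$, $\langle U_3,U_4,U_5\rangle$, $\langle U_6,U_7,U_8\rangle$ with the homographies trivialised (you are in fact more explicit than the paper about why the $\phi_i$ can be absorbed, via the $\PGL(2,q)$ action on a conic). Where you genuinely diverge is in the computation of the order. The paper parametrises the scroll by a $3$-space $M_3\subset\PG(4,q)$ through a rational map $\sigma$ given by nine cubic monomials; since $\sigma$ has a nontrivial kernel (base locus), the pullbacks of three generic hyperplanes are cubic surfaces in $\PG(3,q)$ all singular along a common line, and the paper must perform an explicit elimination: a degree-$15$ resultant in $\theta$, from which $8$ roots coming from the squared factor $(G_2H_1-G_1H_2)^2$ and one root from the base locus are discarded, leaving $6$. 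You instead parametrise by $\PG(1,q)\times\PG(2,q)$, where the map $\mu$ is injective with no base locus, and the count collapses to the multidegree computation $(2h_1+h_2)^3=6h_1h_2^2$. The two parameter spaces are birationally equivalent (the paper's $M_3$ is $\PG(3)$ mapped to $\PG(1)\times\PG(2)$ by $(y_0,y_1,y_2,y_3)\mapsto((y_0,y_1),(y_0,y_2,y_3))$, and the kernel of $\sigma$ is exactly the indeterminacy locus of that map), so your route buys a much shorter and less error-prone degree count that makes the answer $6$ transparent as the degree of the $(2,1)$ Segre--Veronese embedding, at the cost of invoking intersection theory on a product of projective spaces rather than the elementary elimination framework of Semple and Roth that the paper stays within; in either treatment the ``generic $5$-space meets the scroll in six points'' statement is to be read over the algebraic closure, a caveat both arguments leave implicit.
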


\begin{proof}
Denote the set of points on the scroll $\scroll(\C,\C',\C'')$ by $\SC$. 
 Without loss of generality, we can coordinatise so that  
 \begin{eqnarray*}
 \C&=&\{P_{r,s}=(r^2,rs,s^2,0,0,0,0,0,0)\st r,s\in\Fq\},\\
 \C'&=&\{P'_{r,s}=(0,0,0,r^2,rs,s^2,0,0,0)\st r,s\in\Fq\}\\ \C''&=&\{P''_{r,s}=(0,0,0,0,0,0,r^2,rs,s^2)\st r,s\in\Fq\}.
 \end{eqnarray*}
 and so that the homographies $\phi_i$ are essentially the identity. That is, without loss of generality we coordinatise so that $\SC$ contains the $q+1$ planes $\langle P_{r,s},\ P'_{r,s},\ P''_{r,s}\rangle$ for $r,s\in\Fq$ (not both 0). 
Thus the points of $\SC$ form a variety denoted $\V(\SC)$. We determine the order and dimension of $\V(\SC)$ using techniques given in Semple and Roth \cite[I.4]{SR}. 
We  show that $\V(\SC)$ has dimension 3 by showing that the points of $\SC$ are in one-to-one algebraic correspondence with the points of a 3-space in 
$\PG(4,q)$.

Using the notation of \cite{SR}, let $M(y_0,y_1,y_2,y_3,y_4)=y_4$, the points of $\PG(4,q)$ satisfying $M=0$ form a  3-space which we denote $M_3$. Note that  $M_3$ is an irreducible primal of dimension 3 and order 1. 
Consider the following nine homogeneous cubic equations $F_i=F_i(y_0,y_1,y_2,y_3,y_4)=0$  where $F_0=y_0^3$, $ F_1=y_0^2y_1$, $  F_2=y_0y_1^2$, $  F_3=y_0^2y_2$, $  F_4=y_0y_1y_2,$ $ F_5=y_1^2y_2$, $  F_6=y_0^2y_3$, $  F_7=y_0y_1y_3$, $  F_8=y_1^2y_3.$
Consider the map $\sigma\colon \PG(4,q)\rightarrow \PG(8,q)$ such that 
\begin{eqnarray*}
\sigma(y_0,y_1,y_2,y_3,0)&=&\big(F_0(y_0,y_1,y_2,y_3,0),\ldots, F_8(y_0,y_1,y_2,y_3,0)\big)\\
 &=&(y_0^3,\ y_0^2y_1,\ y_0y_1^2,\ \  y_0^2y_2,\ y_0y_1y_2,\ y_1^2y_2,\ \  y_0^2y_3,\ y_0y_1y_3,\ y_1^2y_3)\\
&=& y_0(y_0^2,\ y_0y_1,\ y_1^2,\ 0,0,0,\ 0,0,0)\ +\ y_2( 0,0,0,y_0^2,\ y_0y_1,\ y_1^2,\ 0,0,0)\ \\&&\quad +\ y_3(0,0,0,\ 0,0,0,\ y_0^2,\ y_0y_1,\ y_1^2).
\end{eqnarray*} 
The map $\sigma$
is an algebraic one-to-one correspondence between  the points of the 3-space $M_3$ and the points of $\SC$. Hence the points of $\S$ form a variety   $\V(\SC)$ which has dimension 3. 
 Note that the kernel of $\sigma$ is $\ker\sigma=\{(0,1,0,0,0), (0,0,a,b,0)\,|\, a,b\in\Fq\}$. 

 Next we show that $\V(\SC)$ has order $6$, that is, we show that  a generic 5-space of $\PG(8,q)$ meets the pointset $\SC$ in six points.
A 5-space in $\PG(8,q)$ is determined by the intersection of three hyperplanes.  
Consider three hyperplane $\Pi_i$, $i=1,2,3$ of $\PG(8,q)$ of equation 
 $ a_{i0}x_0+\ldots+a_{i8}x_8=0$ respectively. The point $$(y_0^3,\ y_0^2y_1,\ y_0y_1^2,\ y_0^2y_2,\ y_0y_1y_2,\ y_1^2y_2,\ y_1^2y_2,\ y_0^2y_3,\ y_0y_1y_3,\ y_1^2y_3)$$ of $\SC$ lies on the hyperplane $\Pi_i$ if
 $a_{i0}y_0^3+a_{i1}y_0^2y_1+a_{i2}y_0y_1^2+ a_{i3}y_2y_0^2+a_{i4}y_2y_0y_1+a_{i5}y_2y_1^2+a_{i6}   y_0^2y_3+a_{i7}y_0y_1y_3+a_{i8} y_1^2y_3=0,
$
and we denote this equation  by $g_i(y_0,y_1,y_2,y_3)=0$.
 The set of points of $\SC$ satisfying $g_i=0$ for some $i\in\{1,2,3\}$ corresponds to a set of points $\K_i$  in the 3-space $M_3$. So   $\K_i$ is a cubic primal in $\PG(3,q)$, that  is a variety $\V^3_2$.
Note that each primal  $\K_i$, $i=1,2,3$  contains the kernel of $\sigma$. Further, for each $i\in\{1,2,3\}$ the four partial derivatives of the equation $g_i$ vanish for the points $(0,0,a,b)$, $a,b\in\Fq$. That is, the line $\ell=\{(0,0,a,b)\st a,b\in\Fq\}$ is a singular line of each primal $\K_1,\K_2,\K_3$. 
As each plane meets a cubic primal $\V^3_2$ in a cubic curve $\V^3_1$, each plane through $\ell$ meets $\K_i$ in either the line $\ell$ counted thrice, or $\ell$ counted twice and another line. 

To determine the number of points of $\SC$ in a generic 5-space of $\PG(8,q)$, we determine the number of points in  
$\K_1\cap\K_2\cap\K_3$ which are not in the kernel of $\sigma$, with $\K_1,\K_2,\K_3$ in general position in relation to each other. 
To abbreviate the notation, we define  nine functions $F_i=F_i(\theta)$, $G_i=G_i(\theta )$, $H_i=H_i(\theta )$, for $i=1,2,3$, where 
$$F_i=a_{i0}\theta ^2+a_{i1} \theta  +a_{i2},\quad G_i=a_{i3}\theta ^2+a_{i4} \theta  +a_{i5},\quad H_i=a_{i6}\theta ^2+a_{i7} \theta  +a_{i8} $$
As we are looking at the generic case,  we can assume $y_1\neq 0$ (since the plane $y_1=0$ containing a point of $\K_1\cap\K_2\cap\K_3$ imposes a condition on the $a_{ij}$, contradicting the surfaces being in general position). We can parameterise the points of $\K_1$ with $y_1\neq0$ 
by letting $y_1=1$, $y_0=\theta $, $y_2=\phi$, so $g_1(\theta ,1,\phi,w)=\theta  F_1+\phi G_1+y_3 H_1$, and    $-y_3H_1=\theta  F_1+\phi G_1$. 
If $H_1(\theta )=0$, this either gives the point $(0,1,0,1)$ which lies in the kernel of $\sigma$, and so is not of interest to us; or it forces a linear relationship between the coefficients $a_{ij}$, contradicting the surfaces being in general position.
That is,  we can assume  
 $H_1(\theta )\neq0$, 
 and so the points on $\K_1$ with $y_1\neq 0$ are  
 $$P_{\theta ,\phi}=(\theta  H_1,\ H_1,\ \phi H_1,-\theta  F_1-\phi G_1).$$
To find the intersection of $\K_1$ and $\K_2$, we substitute the point $P_{\theta ,\phi}$ into the equation $g_2=0$, giving the following curve of order nine, $$H_1^2\big(\theta  H_1F_2+\phi H_1G_2-\theta  H_2 F_1-\phi H_2G_1\big)=0.$$

As $H_1\neq0$ for points on $\K_1$, we have $\phi (H_2G_1-H_1G_2)=\theta (H_1F_2-H_2F_1) .$ So provided $H_2G_1-H_1G_2\neq0$, 
$$\phi= \theta  \frac{H_1F_2-H_2F_1}{H_2G_1-H_1G_2} $$
So points on $\K_1\cap\K_2$ have form: 
$$Q_\theta =P_{\theta ,\phi}=\big(\theta (H_2G_1-H_1G_2),\ (H_2G_1-H_1G_2),\  \theta (H_1F_2-H_2F_1),\ \theta  (F_1G_2-F_2G_1) \big) $$
Note that $F_i,G_i,H_i$ are quadratics in $\theta $, so the point $Q_\theta $ contains coordinates which are 
degree at most five in $\theta $. This  corresponds to the fact the $\V^3_2\cap \V^3_2=\V^9_1$, and as $\ell$ is a double line in both $\K_1$ and $\K_2$, it counts four times in the intersection, leaving a quintic $\V^5_1$ which contains the point $(0,1,0,0)$.

Finally, we want to find points of the form $Q_\theta $ which lie on $\K_3$. Substituting $Q_\theta $ into the equation $g_3=0$ gives $$(G_2H_1-G_1H_2)^2\, \theta  \Big( (H_2G_1-H_1G_2)F_3+(H_1F_2-H_2F_1)G_3+
(F_1G_2-F_2G_1) H_3\Big)=0. $$
This is an equation of degree 15 in $\theta$ (recall $\V^9_1\cap\V^3_2=\V^{15}_1$).
 Recall that $(G_2H_1-G_1H_2)\neq0$ for points on $\K_1\cap\K_2$, so eight of the roots do not contribute to a point in the general intersection. If $\theta =0$, then we have a point on the plane $y_0=0$, which lies in the kernel of the map $\sigma$, so is not of interest to us.
The remaining term is degree six in $\theta $, so there are in general six solutions. That is, excluding points with $y_0=0$, our three cubic surfaces generically meet in six points.
That is, the number of points of $\SC$ in a generic 5-space is six, and so the variety $\V(\SC)$ has order six as required.
\end{proof}

\section{Coordinates in the Bose representation}\Label{sec:coord}

\subsection{Coordinates for a Bose plane $\Bo{X}$}\Label{sec:coord2}

Let $\tau$ be a primitive element of $\Fq$ with primitive polynomial $$x^3-t_2x^2-t_1x-t_0$$  for $t_0,t_1,t_2\in\Fq$. 
 Let $\bar X=(x,y,z)\in\PG(2,q^3)$.   
  Using homogeneous coordinates we can write $\bar X= \rho(x,y,z)=(\rho x,\rho y,\rho z)$ for any $\rho\in\Fqqq\setminus\{0\}$. 
  As $\rho$ varies,  we generate a related point of $\PG(8,q)$, giving us the $(q^3-1)/(q-1)$ points of the plane $\Bo{X}$. 
To describe this plane, we determine the coordinates of three non-collinear points in it. 

Firstly, consider the representation of the coordinates of $\bar X$ with $\rho=1$. 
 We can write $$\bar X=(x,y,z)=\big(x_0+x_1\tau+x_2\tau^2,\ y_0+y_1\tau+y_2\tau^2,\ z_0+z_1\tau+z_2\tau^2\big)$$ for unique $x_i,y_i,z_i\in\Fq$.  Related to this representation of $\bar X$  is the  point $X_0$ of $\PG(8,q)$ where
\begin{eqnarray}\label{eqnx0}
X_0=(x_0,\,x_1,\,x_2,\,y_0,\,y_1,\,y_2,\,z_0,\,z_1,\,z_2).
\end{eqnarray} 
Secondly, consider the representation  of the coordinates of $\bar X$ with $\rho=\tau$, so $\bar X=\tau(x,y,z)=(\tau x,\tau y,\tau z)$. The first coordinate expands as  
\begin{eqnarray*}
\tau x&=&\tau(x_0+x_1\tau+x_2\tau^2)
= x_2t_0+(x_0+x_2t_1)\tau+(x_1+x_2t_2)\tau^2.
\end{eqnarray*} 
Similarly, we can expand the second and third coordinates $\tau y$ and $\tau z$. 
Corresponding  to this representation of $\bar X$  is the  point $X_1\in\PG(8,q)$ whose first three coordinates are 
\begin{eqnarray}\label{eqnx1}
X_1=\Big(x_2t_0,\ x_0+x_2t_1,\ x_1+x_2t_2,\ \ \ldots\ldots\Big).
\end{eqnarray}  
 Thirdly, consider the representation $\bar X=\tau^2(x,y,z)$. The first coordinate is $\tau^2 x=\tau^2(x_0+x_1\tau+x_2\tau^2)$. Expanding and simplifying yields 
$$ t_0(x_1+t_2x_2)\,+\,\big(t_0x_2+t_1(x_1+t_2x_2)\big)\tau\,
+\,\big(x_0+t_1x_2+t_2(x_1+t_2x_2)\big)\tau^2.
$$
Similarly, we expand the other two  coordinates $\tau^2 y$ and $\tau^2 z$.
 Corresponding  to this representation of $\bar X$  is the  point $X_2\in\PG(8,q)$ whose first three coordinates are 
 \begin{eqnarray}\label{eqnx2}
 X_2=\Big(t_0(x_1+t_2x_2),\ t_0x_2+t_1(x_1+t_2x_2),\ x_0+t_1x_2+t_2(x_1+t_2x_2),\ \ \ldots\ldots\Big). 
\end{eqnarray}
More generally, consider the representation $\bar X=\rho(x,y,z)=(\rho x,\rho y,\rho z)$, where $\rho=p_0+p_1\tau+p_2\tau^2$ for unique $p_0,p_1,p_2\in\Fq$. Straightforward expanding  and simplifying yields the   point of $\PG(8,q)$ corresponding to this representation of $\bar X$ is the point $p_0X_0+p_1X_1+p_2X_2$ (where $X_0,X_1,X_2$ are given in (\ref{eqnx0}), (\ref{eqnx1}),  (\ref{eqnx2})).  We have proved the following result.

\begin{lemma}\Label{lem:coordX}
Let $\bar X$ be a point in $\PG(2,q^3)$, then 
 the Bose representation of $\bar X$ in $\PG(8,q)$ is the plane $$\Bo{X}=\langle X_0,X_1,X_2\rangle$$
 where $X_0,X_1,X_2$ are given in (\ref{eqnx0}), (\ref{eqnx1}) and (\ref{eqnx2}).
 \end{lemma}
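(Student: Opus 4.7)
The plan is to use the identification of a $1$-dimensional $\Fqqq$-subspace of $\Fqqq^3$ with a $3$-dimensional $\Fq$-subspace of $\Fq^9$ induced by the choice of basis $\{1,\tau,\tau^2\}$ of $\Fqqq$ over $\Fq$. The point $\bar X\in\PG(2,q^3)$ corresponds to the $1$-dimensional $\Fqqq$-subspace spanned by $(x,y,z)$; the Bose plane $\Bo{X}$ is by definition the projectivization of this subspace viewed as a $3$-dimensional $\Fq$-space, so it suffices to exhibit three $\Fq$-linearly independent points of $\PG(8,q)$ arising from three $\Fq$-linearly independent choices of the scalar $\rho\in\Fqqq$.

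The natural choice is $\rho\in\{1,\tau,\tau^2\}$, since this is an $\Fq$-basis of $\Fqqq$. First I would compute the coordinate vector of $\rho(x,y,z)$ with respect to the basis $\{1,\tau,\tau^2\}$ for each such $\rho$: for $\rho=1$ this is immediate and yields $X_0$; for $\rho=\tau$ one multiplies each component $x,y,z$ by $\tau$ and reduces using the primitive relation $\tau^3=t_2\tau^2+t_1\tau+t_0$ to read off $X_1$; for $\rho=\tau^2$ one applies the relation twice (once to $\tau^3$, once to $\tau^4=t_0\tau+t_1\tau^2+t_2\tau^3$) and reads off $X_2$.

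Then I would appeal to $\Fq$-linearity in $\rho$: for an arbitrary $\rho=p_0+p_1\tau+p_2\tau^2$ with $p_i\in\Fq$, the map $\rho\mapsto\rho(x,y,z)$ and the subsequent extraction of $\Fq$-coordinates in the basis $\{1,\tau,\tau^2\}$ are both $\Fq$-linear, so the resulting point of $\PG(8,q)$ is exactly $p_0X_0+p_1X_1+p_2X_2$. Since $(x,y,z)\neq(0,0,0)$ and $\{1,\tau,\tau^2\}$ is an $\Fq$-basis of $\Fqqq$, the three vectors $X_0,X_1,X_2$ are $\Fq$-linearly independent and hence span a plane, which is precisely $\Bo{X}$.

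The main technical obstacle is the double reduction needed for $X_2$, where powers up to $\tau^4$ appear and must be recollected cleanly in the basis $\{1,\tau,\tau^2\}$; once this is done carefully for one coordinate (say the first), the second and third coordinates follow by an identical computation with $y_i$ and $z_i$ replacing $x_i$. I would not grind through these substitutions in full but simply indicate the reduction pattern and trust $\Fq$-linearity to cover the general $\rho$ case.
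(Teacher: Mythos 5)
Your proposal is correct and follows essentially the same route as the paper: evaluate the representations $\rho(x,y,z)$ for $\rho\in\{1,\tau,\tau^2\}$, reduce via the primitive relation for $\tau^3$ to obtain $X_0,X_1,X_2$, and then use $\Fq$-linearity in $\rho$ to conclude that a general scalar $\rho=p_0+p_1\tau+p_2\tau^2$ yields the point $p_0X_0+p_1X_1+p_2X_2$, so these three points span $\Bo{X}$. The only (harmless) addition is your explicit remark on the $\Fq$-linear independence of $X_0,X_1,X_2$, which the paper leaves implicit.
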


\subsection{The transversal planes of the Bose 2-spread $\mathbb S$}\Label{sec-ai}

We determine the coordinates of the three transversal planes of the Bose 2-spread $\mathbb S$. First, we define three points in $\PG(8,q^3)$, we state this as a definition for easy reference. 

\begin{definition}\label{def-aA}
Denote the following constants ${\mathsf a}_0,{\mathsf a}_1,{\mathsf a}_2\in\Fqqq$ and the points $A_0,A_1,A_2\in\PG(8,q^3)$, as 
\begin{eqnarray*}
{\mathsf a}_0&=&t_1+t_2\tau-\tau^2=-\tau^q\tau^{q^2},\\ {\mathsf a}_1&=&t_2-\tau=\tau^q+\tau^{q^2},\\ {\mathsf a}_2&=&-1,
\\A_0&=&({\mathsf a}_0,{\mathsf a}_1,{\mathsf a}_2,\ 0,0,0,\ 0,0,0),\\ 
A_1&=&(0,0,0,\ {\mathsf a}_0,{\mathsf a}_1,{\mathsf a}_2,\ 0,0,0),\\
A_2&=&(0,0,0,\ 0,0,0,\ {\mathsf a}_0,{\mathsf a}_1,{\mathsf a}_2).
\end{eqnarray*}
\end{definition}

\begin{lemma}\Label{lem:tp} Using the notation of Definition~\ref{def-aA}, 
the three transversal planes of the Bose 2-spread $\mathbb S$ are  \[
\Pit=\langle A_0,A_1,A_2\rangle,\quad \Pit^q=\langle A_0^q,A_1^q,A_2^q\rangle,\quad \Pit^{q^2}=\langle A_0^{q^2},A_1^{q^2},A_2^{q^2}\rangle.
\]
Moreover, the point $\bar X=(x,y,z)\in\PG(2,q^3)$  corresponds to the  point $X$  in $\Pit$ where $X=\Bo{X}\star\cap\Gamma$ and 
\[
X={\mathsf a}_0X_0+{\mathsf a}_1X_1+{\mathsf a}_2X_2=xA_0+yA_1+zA_2,
\]
 where $X_0,X_1,X_2$ are given in (\ref{eqnx0}), (\ref{eqnx1}), (\ref{eqnx2}).
\end{lemma}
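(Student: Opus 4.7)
The plan is to first verify the coordinate identity
\[
\mathsf{a}_0 X_0 + \mathsf{a}_1 X_1 + \mathsf{a}_2 X_2 = xA_0 + yA_1 + zA_2
\]
by direct expansion. The points $X_0, X_1, X_2$ are structured into three independent blocks of three coordinates, one block each for $x,y,z$, and $A_0, A_1, A_2$ place the triple $(\mathsf{a}_0, \mathsf{a}_1, \mathsf{a}_2)$ into these same three blocks. So the identity splits into three structurally identical block identities, and it suffices to check the $x$-block: using (\ref{eqnx0})--(\ref{eqnx2}) and collecting powers of $\tau$ modulo $\tau^3 = t_0 + t_1\tau + t_2\tau^2$, the coefficients of $1, \tau, \tau^2$ on the left match those of $x(t_1 + t_2\tau - \tau^2), x(t_2 - \tau), -x$ on the right.

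Setting $X := xA_0 + yA_1 + zA_2$, the identity places $X$ in both $\Pit := \langle A_0, A_1, A_2\rangle$ and $\Bo{X}\star = \langle X_0, X_1, X_2\rangle$ (by Lemma~\ref{lem:coordX}). To show $\Pit$ is a transversal plane of $\mathbb S$, I would verify the two conditions of \cite[Theorem 6.1]{CasseOKeefe}: that $\Pit$ is disjoint from $\PG(8,q)$, and that $\Pit, \Pit^q, \Pit^{q^2}$ jointly span $\PG(8, q^3)$. Disjointness follows because $\mathsf{a}_2 = -1 \in \Fq^*$ while $\mathsf{a}_1 = t_2 - \tau \notin \Fq$; any $\Fqqq$-rescaling $\mu$ of a point of $\Pit$ with nonzero $x$-block would require $\mu$ and $\mu \tau$ both in $\Fq$, forcing $\mu = 0$, and similarly for the other blocks.

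For joint spanning, stack the coordinate vectors of $A_0, A_1, A_2, A_0^q, A_1^q, A_2^q, A_0^{q^2}, A_1^{q^2}, A_2^{q^2}$ as rows of a $9\times 9$ matrix; after an appropriate permutation of rows and columns, this matrix is block-diagonal with three copies of
\[
M = \begin{pmatrix} \mathsf{a}_0 & \mathsf{a}_1 & \mathsf{a}_2 \\ \mathsf{a}_0^q & \mathsf{a}_1^q & \mathsf{a}_2^q \\ \mathsf{a}_0^{q^2} & \mathsf{a}_1^{q^2} & \mathsf{a}_2^{q^2}\end{pmatrix}.
\]
Writing $a = \tau$, $b = \tau^q$, $c = \tau^{q^2}$ and substituting $\mathsf{a}_0 = -\tau^q\tau^{q^2}$, $\mathsf{a}_1 = \tau^q + \tau^{q^2}$, $\mathsf{a}_2 = -1$, the rows of $M$ become $(-bc, b+c, -1)$, $(-ac, a+c, -1)$, $(-ab, a+b, -1)$. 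Two successive row subtractions factor the determinant as $(a-b)(b-c)(c-a)$, nonzero since $\tau \notin \Fq$. Hence the nine points are $\Fqqq$-linearly independent and $\Pit, \Pit^q, \Pit^{q^2}$ span $\PG(8, q^3)$.

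By \cite[Theorem 6.1]{CasseOKeefe}, $\Pit$ is then the transversal plane of a unique regular 2-spread $\mathbb S'$ whose elements are $\langle Y, Y^q, Y^{q^2}\rangle \cap \PG(8, q)$ for $Y \in \Pit$. Taking $Y = X$, the invertibility of $M$ shows $\{X, X^q, X^{q^2}\}$ is an $\Fqqq$-basis of $\Bo{X}\star$, so $\langle X, X^q, X^{q^2}\rangle \cap \PG(8,q) = \Bo{X}\star \cap \PG(8,q) = \Bo{X}$. Thus $\mathbb S' = \mathbb S$, and $\Pit, \Pit^q, \Pit^{q^2}$ are the three transversal planes of $\mathbb S$; the two stated expressions for $X$ are precisely the two sides of the coordinate identity. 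The main obstacle is pushing through the initial coordinate identity cleanly; once it is in hand, each subsequent step is short, with only the determinant calculation for $M$ requiring any real computation.
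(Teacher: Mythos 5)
Your proof is correct, and its core --- the verification of the identity $\mathsf a_0X_0+\mathsf a_1X_1+\mathsf a_2X_2=xA_0+yA_1+zA_2$ by block-wise expansion modulo $\tau^3=t_0+t_1\tau+t_2\tau^2$ --- is exactly the computation the paper performs. Where you diverge is in how you conclude that $\langle A_0,A_1,A_2\rangle$ is a transversal plane of $\mathbb S$. The paper argues in one line: the identity shows every extended spread element $\Bo{X}\star$ meets $\langle A_0,A_1,A_2\rangle$, and a plane meeting all $q^6+q^3+1$ pairwise disjoint extended spread elements must be one of the three transversals. You instead verify the hypotheses of the Casse--O'Keefe construction directly --- disjointness of $\Pit$ from $\PG(8,q)$ via the $\Fq$-rationality argument on $\mathsf a_1,\mathsf a_2$, and the spanning of $\PG(8,q^3)$ by $\Pit,\Pit^q,\Pit^{q^2}$ via the block-diagonal determinant $(a-b)(b-c)(c-a)$ --- and then identify the resulting spread with $\mathbb S$ using the invertibility of $M$ to get $\langle X,X^q,X^{q^2}\rangle=\langle X_0,X_1,X_2\rangle=\Bo{X}\star$. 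Your route is longer but more self-contained: it makes explicit the facts (disjointness and spanning) that the paper's shortcut leaves implicit in the phrase ``it is one of the transversal planes,'' and the observation that $M$ carries $\{X_0,X_1,X_2\}$ to $\{X,X^q,X^{q^2}\}$ is a clean way to see that the Casse--O'Keefe spread built from $\Pit$ really is $\mathbb S$. Two trivial polish points: in the disjointness argument the conclusion should be $\mu x=0$ hence $x=0$ (not $\mu=0$), and the nonvanishing of the determinant needs that $\tau,\tau^q,\tau^{q^2}$ are pairwise distinct, which follows because the minimal polynomial of $\tau$ is an irreducible cubic, not merely because $\tau\notin\Fq$.
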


\begin{proof} 
Let $\bar X=(x,y,z)\in\PG(2,q^3)$, so $\Bo{X}=\langle X_0,X_1,X_2\rangle$ as calculated above. Straightforward calculations   show that  in $\PG(8,q^3)$ we have  ${\mathsf a}_0X_0+{\mathsf a}_1X_1+{\mathsf a}_2X_2=xA_0+yA_1+zA_2.$ Hence these are the homogeneous coordinates of a point that lies in the plane $\Bo{X}\star$ and in the plane $\langle A_0,A_1,A_2\rangle$. Hence the extension of the Bose spread element $\Bo{X}$ meets the plane $\langle A_0,A_1,A_2\rangle$. As every extended Bose spread element meets  the plane $\langle A_0,A_1,A_2\rangle$, it is one of the transversal planes, which we denote by $\Pit$. The other two transversal planes are hence $\Pit^q$ and $\Pit^{q^2}$.
\end{proof}

Note that the points $A_0,A_1,A_2$ of the transversal plane $\Pit$ correspond to the fundamental triangle of $\PG(2,q^2)$, namely $\bar A_0=(1,0,0)$, $\bar A_1=(0,1,0)$, $\bar A_2=(0,0,1)$. 

We can now write $\Bo{X}$ in terms of the point $X\in\Gamma$, namely 
$$\Bo{X}\star=\langle X,X^q,X^{q^2}\rangle,$$
 where
\[
\begin{array}{ccccccccc}
X&=&xA_0&+&yA_1&+&zA_2&\in&\Pit\\
X^{q}&=&x^qA_0^{q}&+&y^qA_1^{q}&+&z^qA_2^{q}&\in&\Pit^q \\
X^{q^2}&=&x^{q^2}A_0^{q^2}&+&y^{q^2}A_1^{q^2}&+&z^{q^2}A_2^{q^2}&\in&\Pit^{q^2}.
\end{array}
\]

In order to study $\Fq$-subplanes later in this article, we need to develop a description of the plane in $\PG(8,q^3)$ which contains the three points 
$xA_0+yA_1+zA_2$, $ xA_0^{q}+yA_1^{q}+zA_2^{q}$ and  $ xA_0^{q^2}+yA_1^{q^2}+zA_2^{q^2}$.
The next two subsections are devoted to carefully calculating a useful description of  these points.

\subsection{Conjugacy with respect to an  $\Fq$-subplane}\Label{sec-conj}

Let $\Fq$ denote the finite field of order $q$. 
An $\Fq$-subplane  of $\PG(2,q^3)$ is a subplane   of $\PG(2,q^3)$  which has order $q$, 
that is, a subplane isomorphic to $\PG(2,q)$. An $\Fq$-subline is a line of an $\Fq$-subplane, that is,  isomorphic to  $\PG(1,q)$. We will define conjugacy with respect to an $\Fq$-subplane or subline. 

First consider the $\Fq$-subplane $\bar \pi_0=\PG(2,q)$ of $\PG(2,q^3)$. There are two collineations in $\PGammaL(3,q^3) $ which have order 3 and fix $\bar\pi_0$ pointwise, namely $\bar{\mathsf c}$ and $\bar{\mathsf c}^2$ where 
 \begin{eqnarray*}
 \bar{\mathsf c} \colon \PG(2,q^3)&\longrightarrow& \PG(2,q^3)  \\
\bar X= \begin{pmatrix} x\\y\\z\end{pmatrix}&\longmapsto & \bar X^q= \begin{pmatrix} x^q\\y^q\\z^q\end{pmatrix}.
 \end{eqnarray*}
  In fact, ${\bar G}_{\pi_0}=\langle\bar{\mathsf c} \rangle $ is the unique subgroup of $\PGammaL(3,q^3)$ which fixes $\bar \pi_0$ pointwise and has order 3. For $\bar X\in\PG(2,q^3)\setminus\bar \pi_0$, the three points $\bar X$, $\bar X^{\bar{\mathsf c}}$, $\bar X^{\bar{\mathsf c}^2}$ are called \emph{conjugate with respect to $\bar \pi_0$}. 
The collineation $\bar{\mathsf c} $ can be extended to act on points of $\PG(2,q^6)$, and we denote the extension of $\bar{\mathsf c}$ to $\PGammaL(3,q^6)$ by  $\bar{\mathsf c} $ as well. That is, acting on  points of $\PG(2,q^6)$, we have 
 $\bar{\mathsf c} (\bar X)=\bar X^q$, so $\bar{\mathsf c}$ has order 6 when acting on $\PG(2,q^6)$.  Under the collineation $\bar{\mathsf c}$, a point $\bar X\in\PG(2,q^6)$ lies in an orbit of size:  
 $1$ if  $\bar X\in\bar \pi_0$;
 $3$ if $\bar X\in\PG(2,q^3)\setminus\bar \pi_0$;
 $2$ or $6$ if $\bar X\in\PG(2,q^6)\setminus\PG(2,q^3)$, depending on whether   $\bar X$ belongs to the $\Fqq$-subplane
 $\PG(2,q^2)$ that contains $\bar \pi_0=\PG(2,q)$, or not.

More generally, let $\bar \pi$ be an $\Fq$-subplane of $\PG(2,q^3)$.
 Acting on the points of $\PG(2,q^3)$ is a unique collineation group ${\bar G}_\pi\subseteq \PGammaL(3,q^3)$ which fixes $\bar \pi$ pointwise and has order 3. 
 We wish to distinguish between the two non-identity collineations in ${\bar G}_\pi$, and do so as follows. 
 Consider any homography that maps 
 $\bar\pi$ to $\bar\pi_0$, and denote its  $3\times 3$ non-singular matrix over $\Fqqq$   by $A$, so if $\bar X\in\bar  \pi$, then $A\bar X\in \bar \pi_0$. 
 Let  $\ocpi (\bar X)= A^{-1} \,\bar { \mathsf c}( A \bar X).$
As $\ocpi$ has order 3 and fixes $\bar \pi$ pointwise, 
 we have ${\bar G}_\pi=\langle \ocpi \rangle $. We expand $\ocpi$, 
 and to avoid confusion use the following notation. For a $3\times 3$ matrix  $A=(a_{ij})$, $i,j=1,2,3$,  we let the matrix $A^\sigma=(a_{ij}^q)$, $i,j=1,2,3$. 
Thus  $ \ocpi (\bar X)=A^{-1}A^\sigma \bar X^q$, or  writing $B= A^{-1}A^\sigma$, we have $ \ocpi (\bar X)=B \bar X^q$. That is,  
we can without loss of generality  write ${\bar G}_\pi=\langle \ocpi \rangle $ with 
 \begin{eqnarray}\label{defcpi}
 \ocpi\colon \PG(2,q^3)&\longrightarrow& \PG(2,q^3) \nonumber \\
 \bar  X&\longmapsto & B \bar X^q   \end{eqnarray}
 with $B$ a $3\times3$ non-singular matrix over $\Fqqq$.   
For $\bar X\in\PG(2,q^3)\setminus\bar \pi$, the three  points $\bar X$, $\bar X^{\ocpi}$, $\bar X^{\ocpis}$ are called \emph{conjugate with respect to $\bar \pi$}. Note that  
 $\bar X$, $\bar X^{\ocpi}$, $\bar X^{\ocpis}$ are collinear  if and only if $\bar X$ lies on an extended line of $\bar \pi$.

 Suppose we extend the plane $\PG(2,q^3)$ to $\PG(2,q^6)$. 
Then the collineation $\ocpi\in \PGammaL(3,q^3)$ has a natural extension to a collineation of $\PGammaL(3,q^6)$
acting on points of $\PG(2,q^6)$. We also denote the extended collineation by $\ocpi$, so
\begin{eqnarray*}
 \ocpi\colon \PG(2,q^6)&\longrightarrow& \PG(2,q^6)\\
\bar  X&\longmapsto & B \bar X^q. \end{eqnarray*}
 The collineation $\ocpi$ has order 3 when acting on $\PG(2,q^3)$, and order 6 when acting on $\PG(2,q^6)$. 
Under the collineation $\ocpi$, a point $\bar X\in\PG(2,q^6)$ lies in an orbit of size:
 $1$ if  $\bar X\in\bar \pi$;
 $3$ if $\bar X\in\PG(2,q^3)\setminus\bar \pi$;
 $2$ or $6$ if $\bar X\in\PG(2,q^6)\setminus\PG(2,q^3)$.

 Similarly, if $\bar b$ is an $\Fq$-subline of a line $\bar \ell_b$ of $\PG(2,q^3)$, then acting on the points of $\bar \ell_b$ is a unique collineation group $\bar G_b\subseteq \PGammaL(2,q^3)$ of order 3 which fixes $\bar b$ pointwise. 
Moreover, ${\bar G}_\pi$ restricted to acting on $\bar \ell_b$ is isomorphic to $\bar G_b$ if and only if  $\bar b$ is a line of $\bar \pi$. 
Without loss of generality we can write ${\bar G}_b=\langle \ocb \rangle $  where for a point $\bar X\in\bar\ell_b$, 
\begin{eqnarray}
\label{defcb}
\ocb(\bar X)&=&D\bar X^q
\end{eqnarray}
 with $D$ a non-singular matrix over $\Fqqq$.

\subsection{Conjugacy with respect to the  $\Fq$-subplane $\bar\pi_0=\PG(2,q)$ }

We now return to the $\Fq$-subplane $\bar\pi_0=\PG(2,q)$ of $\PG(2,q^3)$ and look in more detail at the collineation $\bar{\mathsf c}\in\PGammaL(3,q^3) $ defined by 
 \begin{eqnarray*}
 \bar{\mathsf c} \colon \PG(2,q^3)&\longrightarrow& \PG(2,q^3)  \\
\bar X= (x,y,z)&\longmapsto &\bar X^{\mathsf c}= \bar X^q= (x^q,y^q,z^q)
 \end{eqnarray*}
We have    ${\bar {G}}_{\pi_0}=\{id,{\bar {\mathsf c}},{\bar {\mathsf c}}^2\}$ where for  $\bar X=(x,y,z)\in \PG(2,q^3)$, $$\bar{X}^{\bar {\mathsf c}}=(x^q,y^q,z^q)\quad \textup{ and }\quad \bar{X}^{\bar {\mathsf c}^2}=(x^{q^2},y^{q^2},z^{q^2}).$$
The collineation ${\bar {\mathsf c}}$ induces a map denoted $\mathsf c$ which acts on the transversal plane $\Pit$ in  the Bose representation. In $\PG(8,q^3)$, $\pi$ is an $\Fq$-subplane of the transversal plane $\Gamma$, and ${\mathsf c}$ acts on the points of $\Gamma$, note that ${\mathsf c}$ is not a collineation of $\PG(8,q^3)$. By Lemma~\ref{lem:tp}, 
a point  $X\in\Gamma$ has form 
$$\begin{array}{rclclcl}
X&=&x A_0+yA_1+z A_2&\in&\Gamma
\end{array}$$ 
for some $x,y,z\in\Fqqq$. So 
$$\begin{array}{rclcrcrcrcl}
X^{\mathsf c}&=&x^qA_0&+&y^qA_1&+&z^qA_2&\in&\Pit,\\
X^{\mathsf c^2}&=&x^{q^2}A_0&+&y^{q^2}A_1&+&z^{q^2}A_2&\in&\Pit.
\end{array}
$$
Moreover, we will be interested in the images of these points under the map $X\mapsto X^q$ of $\PG(8,q^3)$, in particular, 
$$\begin{array}{rclclclclcl}
(X^{\mathsf c^2})^{q}&=&xA_0^{q}&+&yA_1^{q}&+&zA_2^{q}&\in&\Pit^q,\\
(X^{\mathsf c})^{q^2}&=&xA_0^{q^2}&+&yA_1^{q^2}&+&zA_2^{q^2}&\in&\Pit^{q^2},\\
\end{array}
$$

For  $X=xA_0+yA_1+zA_2\in\Pit$, we will be interested in the plane 
\begin{eqnarray}
\langle \, xA_0+yA_1+zA_2,\ xA_0^{q}+yA_1^{q}+zA_2^{q},\  xA_0^{q^2}+yA_1^{q^2}+zA_2^{q^2}\rangle=\langle \,X,\ (X^{\mathsf c^2})^{q},\ (X^{\mathsf c})^{q^2}\,\rangle.\label{eqn-1}
\end{eqnarray}

Consider the extension to $\PG(8,q^6)$. 
Let ${\mathsf e}\in\PGammaL(9,q^6)$ be the unique involution acting on points of $\PG(8,q^6)$ fixing $\PG(8,q^3)$ pointwise. We say the points $X, X^{\mathsf e}$ are conjugate with respect to the quadratic extension from $\PG(8,q^3)$ to $\PG(8,q^6)$, and have 
$$X=(x_0,\ldots,x_{8})\quad \longmapsto \quad
X^{\mathsf e}=(x_0^{q^3},\ldots,x_{8}^{q^3}).$$ 
In particular, for $x,y,z\in\Fqqqqqq$, not all zero, we have  $$(xA_0+yA_1+zA_2)^{\mathsf e}= x^{q^3} A_0+y^{q^3} A_1+z^{q^3} A_2.$$
As $\mathsf e$ fixes $\PG(8,q^3)$ pointwise, it fixes $\Pit$ pointwise, hence $\mathsf e$ induces an automorphism of the plane $\Pit$. Moreover, for  a point $X\in\Pit\blackstar\setminus\Pit$, we have 
$
X^{\mathsf e}=X^{q^3}=X^{\mathsf c^3}$.
It is straightforward to verify that 
$$\begin{array}{rclclclclcl}
(X^{\mathsf c^2\mathsf e})^q&=&(X^{\mathsf c^5})^q&=& xA_0^q&+&yA_1^q&+&zA_2^q &\in&\Pit^q\\
(X^{\mathsf c\mathsf e})^{q^2}&=&(X^{\mathsf c^4})^{q^2}&=&x A_0^{q^2}&+&y A_1^{q^2}&+&z A_2^{q^2}&\in&\Pit^{q^2}.
\end{array}$$
For $X=xA_0+yA_1+zA_2\in\Pit\blackstar$, we will be interested in the plane 
\begin{eqnarray}
\langle \, xA_0+yA_1+zA_2,\ xA_0^{q}+yA_1^{q}+zA_2^{q},\  xA_0^{q^2}+yA_1^{q^2}+zA_2^{q^2}\rangle=\langle \,X,\ (X^{\mathsf c^2\mathsf e})^{q},\ (X^{\mathsf c\mathsf e})^{q^2}\,\rangle
.\label{eqn-2}
\end{eqnarray}
Note that if $X\in\Pit$, then this equation agrees with (\ref{eqn-1}).

\subsection{Coordinates in the Bruck-Bose representation }

We can construct the Bruck-Bose representation of $\PG(2,q^3)$ in $\PG(6,q)$ by intersecting the Bose representation with a 6-space $\Sigma_{6,q}$ of $\PG(8,q)$ which contains a unique 5-space that meets $\mathbb S$ in $q^3+1$ planes. To obtain the same coordinates  for the Bruck-Bose representation used in  \cite[Section 2.2]{BJ-FFA}, we take the line at infinity in $\PG(2,q^3)$ to have equation $z=0$, which contains the points $\bar X=(1,0,0),\bar Y=(0,1,0)$. In $\PG(8,q^3)$, this  corresponds to the line  $g$ in the transversal plane $\Pit$ where  $g=\langle A_0,A_1\rangle$. Take $\Sigma_{6,q}$ as the 6-space of $\PG(8,q)$ consisting of the points $(x_0,x_1,x_2,x_3,x_4,x_5,x_6,0,0)$, $x_i\in\Fq$, not all 0. Then $\sistar=\langle g,g^q,g^{q^2}\rangle$ and contains all points of form $(x_0,x_1,x_2,x_3,x_4,x_5,0,0,0)$. Further,   $g,g^q,g^{q^2}$ are the transversal lines of the regular 2-spread $\S$ in $\si$. An affine point $\bar P=(x,y,1)=(x_0+x_1\tau+x_2\tau^2, \ y_0+y_1\tau+y_2\tau^2, \ 1)$ of $\PG(2,q^3)$ corresponds to the affine point of $\Sigma_{6,q}\setminus\si$ denoted $[P]=\Bo{P}\cap\Sigma_{6,q}$, with coordinates $[P]=(x_0,x_1,x_2,\ y_0,y_1,y_2,\ 1,0,0)$.

\section{Varieties in the Bose representation}\Label{sec:var}

In this section we use coordinates to show how to convert a variety of $\PG(2,q^3)$ into a variety in the Bose representation $\PG(8,q)$. This generalises known techniques from the Bose representation of $\PG(2,q^2)$ in $\PG(5,q)$, and can be used more generally to convert a primal of $\PG(2,q^h)$ to  the intersection of varieties in the Bose representation $\PG(3h-1,q)$. Of particular interest is the variety-extensions to $\PG(8,q^3)$ and $\PG(8,q^6)$.  

We use the following notation. Let  $f(x_0,\ldots,x_n)$ be a homogeneous form over $\Fqh$ of degree $d$ in indeterminates $x_0,\ldots,x_n$. Then $V(f)$ denotes the set of points $X\in\PG(n,q^h)$ such that $f(X)=0$. 
If $f_1,\ldots,f_k$ are homogeneous forms over $\Fqh$ in indeterminates $x_0,\ldots,x_n$, then $V(f_1,\ldots,f_k)=V(f_1)\cap \cdots\cap V(f_k)$ denotes the set of points $X\in\PG(n,q^h)$ such that $f_i(X)=0$, $i=1,\ldots,k$.

\subsection{Varieties of $\PG(2,q^3)$}

We study varieties of $\PG(2,q^3)$ and their corresponding structure in the Bose representation. This study proceeds as follows. 
We first look in Lemma~\ref{lem:var1} at a homogeneous equation in $\PG(2,q^3)$ and convert it to a homogeneous equation in $\PG(8,q)$. Next, in Lemma~\ref{lem:var2}, we look at a variety $\overline \K$ in $\PG(2,q^3)$, and use the calculations from Lemma~\ref{lem:var1} to show that the corresponding set of points  in  the Bose representation in $\PG(8,q)$, forms a variety denoted $\V(\Bo{\K})$. 
We determine the extension $\V(\Bo{\K})\star$ to $\PG(8,q^3)$ in Lemma~\ref{lem:var3}. Lemma~\ref{lem:var3.2} is a stepping stone to  Lemma~\ref{lem:var4} which gives a geometric description of  $\V(\Bo{\K})\star$. The results of these lemmas is then summarised in Theorem~\ref{lem:cone}. 
In Section~\ref{conic-sec} we then use this result to study conics of $\PG(2,q^3)$ in the Bose representation.

\begin{lemma}\Label{lem:var1}
Let $\bar F(x,y,z)$ be a homogeneous equation of degree $k$ over $\Fqqq$. Using $\tau$ as in Section~\ref{sec:coord2}, we write  $x=x_0+\tau x_1+\tau^2 x_2$, $y=y_0+\tau y_1+\tau^2 y_2$, $z=z_0+\tau z_1+\tau^2 z_2$ for unique $x_i,y_i,z_i\in\Fq$. Expanding and simplifying yields $$\bar F(x,y,z)=G(x_0,x_1,x_2,y_0,y_1,y_2,z_0,z_1,z_2)= f_0+\tau f_1+\tau^2 f_2$$ where $f_i=f_i(x_0,x_1,x_2,y_0,y_1,y_2,z_0,z_1,z_2)$ is a homogeneous equation of degree $k$ over $\Fq$, $i=0,1,2$. 
\end{lemma}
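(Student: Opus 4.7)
The plan is to verify the claim by unpacking the substitution monomial by monomial and then invoking uniqueness of representation in the $\Fq$-basis $\{1,\tau,\tau^2\}$ of $\Fqqq$.

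First I would write $\bar F(x,y,z)=\sum_{a+b+c=k} c_{abc}\, x^a y^b z^c$ where the coefficients $c_{abc}\in\Fqqq$. For each monomial, substituting $x=x_0+\tau x_1+\tau^2 x_2$, $y=y_0+\tau y_1+\tau^2 y_2$, $z=z_0+\tau z_1+\tau^2 z_2$ produces a polynomial in the nine indeterminates $x_i,y_i,z_i$. Since each of these substitutions is a linear form in three of the $\Fq$-indeterminates with coefficients in $\Fqqq$, the product $x^a y^b z^c$ expands to a homogeneous polynomial of total degree $a+b+c=k$ in $x_0,\ldots,z_2$ with coefficients in $\Fqqq$. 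Summing over the monomials and multiplying by the coefficients $c_{abc}$ keeps the result homogeneous of degree $k$; call this polynomial $G(x_0,x_1,x_2,y_0,y_1,y_2,z_0,z_1,z_2)$ over $\Fqqq$.

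Next I would use that every coefficient $\gamma\in\Fqqq$ admits a unique decomposition $\gamma=\gamma^{(0)}+\tau\gamma^{(1)}+\tau^2\gamma^{(2)}$ with $\gamma^{(i)}\in\Fq$. Applying this to each coefficient of $G$ and collecting the pieces gives $G=f_0+\tau f_1+\tau^2 f_2$ where $f_i$ is obtained by keeping only the $\tau^i$-part of each coefficient. Because the passage from $\gamma$ to $\gamma^{(i)}$ is an $\Fq$-linear operation that does not touch the indeterminates, each $f_i$ inherits from $G$ the property of being homogeneous of degree $k$; and each $f_i$ has coefficients in $\Fq$ by construction. The uniqueness of the decomposition of elements of $\Fqqq$ over the basis $\{1,\tau,\tau^2\}$ guarantees that the triple $(f_0,f_1,f_2)$ is uniquely determined by $\bar F$.

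There is essentially no obstacle here; the lemma is a bookkeeping statement and the only thing to be careful about is to explain clearly that homogeneity is preserved under the substitution (because each $\Fqqq$-variable is replaced by a linear form in $\Fq$-variables) and to invoke the basis decomposition coefficient by coefficient rather than trying to split $G$ by some other means. With these two observations spelled out the conclusion is immediate.
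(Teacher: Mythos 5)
Your proposal is correct and follows essentially the same route as the paper: substitute the linear forms to get a homogeneous degree-$k$ form $G$ over $\Fqqq$, then decompose each coefficient uniquely in the $\Fq$-basis $\{1,\tau,\tau^2\}$ and collect terms to obtain $f_0,f_1,f_2$. The paper merely makes explicit that the relation $\tau^3=t_0+t_1\tau+t_2\tau^2$ is the mechanism for carrying out that coefficient decomposition, which your basis-uniqueness argument covers.
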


\begin{proof}
Recall from Section~\ref{sec:coord2}, $\tau$  is a primitive element in $\Fqqq$ satisfying $\tau^3= t_0+t_1\tau+t_2\tau^2$ for some $t_0,t_1,t_2\in\Fq$.  
 Let $\bar F=\bar F(x,y,z)$ be a homogeneous form over $\Fqqq$ of degree $k$ in indeterminates $x,y,z$.
 For indeterminates $x_0,x_1,x_2,y_0,y_1,y_{2},z_0,z_1,z_{2}$, substitute $x=x_0+x_1\tau+ x_{2}\tau^{2}$, $y=y_0+y_1\tau+y_{2}\tau^{2}$, $z=z_0+z_1\tau+ z_{2}\tau^{2}$ in $\bar F$ to obtain the homogeneous form $G(x_0,x_1,x_2,y_0,y_1,y_{2},z_0,z_1,z_{2})$ of degree $k$. For each coefficient $a \in \Fqqq$ of $G$, rewrite as $a=a_0+a_1\tau  +a_{2}\tau^{2}$ for unique $a_0,a_1,a_{2} \in \Fq$. Then using $\tau^3= t_0+t_1\tau+t_2\tau^2$, we may write $$G=f_0+f_1\tau  +f_{2}\tau^{2}$$ for unique homogeneous forms $f_0,f_1,f_{2}$ over $\Fq$ of degree $k$ in indeterminates $x_0,x_1,x_2,y_0,y_1,y_{2},z_0,z_1,z_{2}$.
\end{proof}

Note that $G$ is a homogeneous equation with coefficients in $\Fqqq$, so it does not make sense to talk about a variety in $\PG(8,q)$ corresponding to $G$. However, $f_0$, $f_1$ and $f_2$ are homogeneous equations with coefficients in $\Fq$, so we have the corresponding varieties $V(f_0)$, $V(f_1)$  and $V(f_2)$ in $\PG(8,q)$. Moreover, a point $P\in\PG(8,q)$ satisfies $G(P)=0$ if and only if $f_0(P)=f_1(P)=f_2(P)=0$ if and only if $P\in V(f_0)\cap V(f_1)\cap V(f_2)=V(f_0,f_1,f_2)$.

\begin{lemma}\Label{lem:var2}
Let $\bar F(x,y,z)$ be a homogeneous equation of degree $k$ over $\Fqqq$, and let $\bar\K=V(\bar F)$ be the corresponding variety in $\PG(2,q^3)$. Let $G$, $f_0$, $f_1$ and $f_2$   be  derived from $\bar F$ as in Lemma~\ref{lem:var1}. 
 In  the Bose representation of $\PG(2,q^3)$ in $\PG(8,q)$, the pointset of $\Bo{\K}$ 
coincides with the pointset of the  variety $V(f_0,f_1,f_2)=
V(f_0)\cap V(f_1)\cap V(f_2)$. We define 
  $\V(\Bo{\K})=
V(f_0,f_1,f_2)$.
\end{lemma}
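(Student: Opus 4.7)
The plan is to prove the two inclusions $\Bo{\K}\subseteq V(f_0,f_1,f_2)$ and $V(f_0,f_1,f_2)\subseteq \Bo{\K}$ separately, and in each direction the engine is the identity, built into the construction of $G$ in Lemma~\ref{lem:var1}, that says $G$ evaluated at the $\Fq$-coordinates of a point $P\in\Bo{X}$ coincides (via the basis $\{1,\tau,\tau^2\}$) with $\bar F$ evaluated at a specific representative of $\bar X$.

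For the forward inclusion, I would fix $\bar X\in\bar\K$, so $\bar F(\bar X)=0$, and let $P$ be an arbitrary point of the Bose plane $\Bo{X}$. From the coordinate description in Section~\ref{sec:coord2} (summarised in Lemma~\ref{lem:coordX}), $P$ is obtained from some nonzero $\rho\in\Fqqq$ by expanding each coordinate of $\rho\bar X=(\rho x,\rho y,\rho z)$ in the $\Fq$-basis $\{1,\tau,\tau^2\}$: the resulting nine elements of $\Fq$ are the homogeneous coordinates of $P$ in $\PG(8,q)$. Plugging these into $G$ gives, by construction, $\bar F(\rho x,\rho y,\rho z)=\rho^k\bar F(\bar X)=0$. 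Writing $G(P)=f_0(P)+\tau f_1(P)+\tau^2 f_2(P)$ with $f_i(P)\in\Fq$, the $\Fq$-linear independence of $\{1,\tau,\tau^2\}$ forces $f_0(P)=f_1(P)=f_2(P)=0$, so $P\in V(f_0,f_1,f_2)$.

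For the reverse inclusion, let $P\in V(f_0,f_1,f_2)\cap\PG(8,q)$. Since the Bose 2-spread $\mathbb S$ partitions the points of $\PG(8,q)$, $P$ lies in a unique spread plane $\Bo{X}$ for some $\bar X\in\PG(2,q^3)$, and so $P$ arises from some nonzero $\rho\in\Fqqq$ as in the previous paragraph. Running the calculation backwards, the vanishing of all three $f_i$ at $P$ yields $G(P)=0$, hence $\rho^k\bar F(\bar X)=0$; since $\rho\neq 0$ this forces $\bar F(\bar X)=0$, i.e.\ $\bar X\in\bar\K$, and therefore $P\in\Bo{\K}$.

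The only real obstacle is the bookkeeping between the three representations: the point $\bar X\in\PG(2,q^3)$, its chosen scalar multiple $\rho\bar X$, and the resulting point of $\Bo{X}\subseteq\PG(8,q)$. One must verify carefully that every point of $\Bo{X}$ does arise from such a $\rho$ (this is exactly the content of Lemma~\ref{lem:coordX}, where the three basis points $X_0,X_1,X_2$ correspond to $\rho=1,\tau,\tau^2$, and a general $\rho=p_0+p_1\tau+p_2\tau^2$ gives $p_0X_0+p_1X_1+p_2X_2$). Once this identification is granted, the homogeneity of $\bar F$ together with the $\Fq$-linear independence of $\{1,\tau,\tau^2\}$ makes both directions transparent.
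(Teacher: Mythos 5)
Your proposal is correct and follows essentially the same route as the paper: both directions rest on the identity $G(P)=\bar F(\rho x,\rho y,\rho z)$ from Lemma~\ref{lem:var1}, the parametrisation of the points of $\Bo{X}$ by $\rho=p_0+p_1\tau+p_2\tau^2$ from Lemma~\ref{lem:coordX}, and the $\Fq$-linear independence of $\{1,\tau,\tau^2\}$ to pass between $G=0$ and $f_0=f_1=f_2=0$. The paper packages the reverse inclusion by reading $\bar X_Q$ directly off the coordinates of $Q$ rather than invoking the spread partition, but this is the same argument.
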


\begin{proof}
Using the coordinates described in Section~\ref{sec:coord2},  a  point $Q$ in $\PG(8,q)$ 
 corresponds to a unique point of $\PG(2,q^3)$ which we denote by $\bar X_Q$ as follows. Let $Q$ 
 have homogeneous coordinates
\begin{eqnarray*}
Q&=&(a_0,a_1,a_{2},b_0,b_1,b_{2},c_0,c_1,c_{2})
\\&\equiv&( \lambda a_0, \lambda a_1, \lambda a_{2}, \lambda b_0, \lambda b_1, \lambda b_{2}, \lambda c_0, \lambda c_1, \lambda c_{2})
\end{eqnarray*} 
for any $\lambda \in \Fq\setminus\{0\}$. 
The corresponding point in $\PG(2,q^3)$ has coordinates 
\begin{eqnarray*}
\bar X_Q&=& (\lambda a_0+\lambda a_1\tau  +\lambda a_{2}\tau^{2},\ \lambda b_0+\lambda b_1\tau  +\lambda b_{2}\tau^{2},\ \lambda c_0+\lambda c_1\tau  +\lambda c_{2}\tau^{2})\\
&\equiv&(a_0+a_1\tau  +a_{2}\tau^{2},\ b_0+b_1\tau  +b_{2}\tau^{2},\ c_0+c_1\tau  +c_{2}\tau^{2}).
\end{eqnarray*}
 By Lemma~\ref{lem:var1}, 
 $G(Q) =\bar F(\bar X_Q)$, so $G(Q)=0$ if and only if $\bar F(\bar X_Q)=0.$
 As noted above, $G(Q)=0$ if and only if $Q\in V(f_0,f_1,f_2)$. 
 Hence a  point $Q\in\PG(8,q)$ lies in  $V(f_0,f_1,f_2)$ if and only if the corresponding point $\bar X_Q$ lies in $V(\bar F)=\bar \K$.
 
 We now consider the converse, a 
 point $\bar P$ in $\PG(2,q^3)$ 
 corresponds to $q^2+q+1$ points  in $\PG(8,q)$, namely the $q^2+q+1$ points of the plane $\Bo{P}$. We want to show that if $\bar P\in\bar\K$, then in $\PG(8,q)$, every point $Y\in \Bo{P}$ lies in 
 the variety $V(f_0,f_1,f_2)$. 
 The point $\bar P$ has homogeneous coordinates $\bar P=(x,y,z)\equiv \rho (x,y,z)$, for any $\rho\in\Fqqq\setminus\{0\}$. 
 By Lemma~\ref{lem:coordX}, we have 
   $\Bo{P}=\langle X_0,X_1,X_2\rangle$ with points $X_0,X_1,X_2$ defined in Section~\ref{sec:coord2}. Moreover, writing $\rho=p_0+p_1 \tau+p_2\tau^2$, for $p_0,p_1,p_2\in\Fq$, then by  Section~\ref{sec:coord2}, the point in $\Bo{P}$ corresponding to the coordinates   $\rho(x,y,z)$ is the point $$Y_\rho=p_0 X_0+p_1 X_1+p_2 X_2\ \in \Bo{P}.$$ 
  So $\bar P\in\bar K$ if and only if $\bar F(\bar P)=\bar F\big(\rho(x,y,z)\big)=0$ for all $\rho\in\Fqqq\setminus\{0\}$ if and only if 
  $G(Y_\rho)=0$  for all $\rho\in\Fqqq\setminus\{0\}$  if and only if $Y_\rho\in V(f_0,f_1,f_2)$  for all $\rho\in\Fqqq\setminus\{0\}$. Note that the points $Y_\rho$, $\rho\in\Fqqq\setminus\{0\}$ are exactly the points of the plane $\Bo{P}$. Hence $\bar P\in\bar \K$ if and only if $\Bo{P}\subseteq V(f_0,f_1,f_2)$. 
 It follows that $\Bo{\K}=V(f_0,f_1,f_2)$ as required. 
\end{proof}

As noted above, $G$ is a homogeneous equation with coefficients in $\Fqqq$, so $G$ corresponds to a variety in $\PG(8,q^3)$. That is, 
we let $V(G)$ denote the variety of $\PG(8,q^3)$ consisting of the points $X$ of $\PG(8,q^3)$ satisfying $G(X)=0$. 
Further, we can define the varieties $V(G^q)$ and $V(G^{q^2})$ in $\PG(8,q^3)$. 

\begin{lemma}\Label{lem:var3}
Let $\bar F(x,y,z)$ be a homogeneous equation of degree $k$ over $\Fqqq$, and let $\bar\K=V(\bar F)$ be the corresponding variety in $\PG(2,q^3)$. Let $G$, $f_0$, $f_1$ and $f_2$   be  derived from $\bar F$ as in Lemma~\ref{lem:var1}, and let $\V(\Bo{\K})=
V(f_0,f_1,f_2)$.
 The variety-extension of $\V(\Bo{\K})$ to $\PG(8,q^3)$ is $$\V(\Bo{\K})\star=
V(f_0)\star\cap V(f_1)\star\cap V(f_2)\star= V(G,G^q,G^{q^2}).$$ 
\end{lemma}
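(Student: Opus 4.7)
The plan is to split the statement into its two equalities and dispatch each in turn. The first equality
$\V(\Bo{\K})\star = V(f_0)\star \cap V(f_1)\star \cap V(f_2)\star$
is essentially a tautology: by definition, the variety-extension of $V(f_0,f_1,f_2)$ to $\PG(8,q^3)$ consists of the points of $\PG(8,q^3)$ satisfying the same $\Fq$-equations $f_0 = f_1 = f_2 = 0$, and the set of points of $\PG(8,q^3)$ satisfying $f_i = 0$ is exactly $V(f_i)\star$. So I would simply note this and move on.

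The real content is the second equality $V(f_0)\star \cap V(f_1)\star \cap V(f_2)\star = V(G,G^q,G^{q^2})$. Here I would start by recalling from Lemma~\ref{lem:var1} that $G = f_0 + \tau f_1 + \tau^2 f_2$ with $f_0,f_1,f_2$ having coefficients in $\Fq$. Applying the Frobenius coefficient-conjugation (that is, replacing each $\Fqqq$-coefficient of $G$ by its $q$-th power, which leaves the $f_i$ untouched) gives
\[
G = f_0 + \tau f_1 + \tau^2 f_2, \qquad G^q = f_0 + \tau^q f_1 + \tau^{2q} f_2, \qquad G^{q^2} = f_0 + \tau^{q^2} f_1 + \tau^{2q^2} f_2.
\]
For any point $X \in \PG(8,q^3)$, evaluating at $X$ gives three scalar equations in the three unknowns $u_i := f_i(X) \in \Fqqq$, namely
\[
\begin{pmatrix} G(X) \\ G^q(X) \\ G^{q^2}(X) \end{pmatrix} = \begin{pmatrix} 1 & \tau & \tau^2 \\ 1 & \tau^q & \tau^{2q} \\ 1 & \tau^{q^2} & \tau^{2q^2} \end{pmatrix} \begin{pmatrix} u_0 \\ u_1 \\ u_2 \end{pmatrix}.
\]

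The coefficient matrix is a Vandermonde matrix in $\tau,\tau^q,\tau^{q^2}$; since $\tau$ is primitive in $\Fqqq$ over $\Fq$, its three Galois conjugates are distinct, so the matrix is invertible over $\Fqqq$. Hence $G(X) = G^q(X) = G^{q^2}(X) = 0$ if and only if $f_0(X) = f_1(X) = f_2(X) = 0$, which gives the required equality of subsets of $\PG(8,q^3)$. I do not anticipate any serious obstacle; the only mildly delicate point is keeping straight that $G^q$ denotes the polynomial obtained by conjugating coefficients (so that $G^q$ is a well-defined equation cutting out a subvariety of $\PG(8,q^3)$), rather than the composition of $G$ with the Frobenius on points, and I would state this convention explicitly before invoking the Vandermonde argument.
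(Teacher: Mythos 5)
Your proposal is correct and follows essentially the same route as the paper: both treat the first equality as definitional and then observe that $G, G^q, G^{q^2}$ are three linearly independent $\Fqqq$-combinations of $f_0,f_1,f_2$, hence cut out the same pointset. The only difference is that you justify the linear independence explicitly via the Vandermonde determinant in $\tau,\tau^q,\tau^{q^2}$, which the paper merely asserts.
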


\begin{proof}
As $\V(\Bo{\K})=V(f_0)\cap V(f_1)\cap V(f_2)$, we have 
$$\V(\Bo{\K})\star=V(f_0)\star\cap V(f_1)\star\cap V(f_2)\star.$$  
The set of points of $\PG(8,q^3)$  satisfying the three equations $f_0=0$, $f_1=0$, $f_2=0$ is equivalent to the set of points satisfying any three linearly independent equations of form $\lambda_0f_0+\lambda_1f_1+\lambda_2 f_2$ where $\lambda_1,\lambda_2,\lambda_3\in\Fqqq$. 
Recalling that $f_0,f_1,f_2$ are equations over $\Fq$, we consider the three linearly independent equations $$G=f_0+\tau f_1+\tau^2 f_2,\quad G^q= f_0+\tau^q f_1+\tau^{2q} f_2,\quad G^{q^2}= f_0+\tau^{q^2} f_1+\tau^{2{q^2}} f_2.$$ So a point $P\in\PG(8,q^3)$ satisfies $f_0(P)=f_1(P)=f_2(P)=0$ if and only if $G(P)=G^q(P)=G^{q^2}(P)=0$. Hence 
$$\V(\Bo{\K})\star=
V(G)\cap V(G^{q})\cap V(G^{q^2})=V(G,G^{q},G^{q^2})$$ as required.
\end{proof}

\begin{lemma}\Label{lem:var3.2}
Let $\bar F(x,y,z)$ be a homogeneous equation of degree $k$ over $\Fqqq$, and let $\bar\K=V(\bar F)$ be the corresponding variety in $\PG(2,q^3)$. Let $G$, $f_0$, $f_1$ and $f_2$   be  derived from $\bar F$ as in Lemma~\ref{lem:var1}, and let $\V(\Bo{\K})=
V(f_0,f_1,f_2)$.
Consider the Bose representation of $\PG(2,q^3)$ with transversal planes $\Pit,\Pit^q,\Pit^{q^2}$ in $\PG(8,q^3)$. 
 Then the variety $V(G)$ of $\PG(8,q^3)$ is a cone with base $V(G) \cap\Pit=\K$  and vertex $\langle \Pit^q,\Pit^{q^2} \rangle$. 
\end{lemma}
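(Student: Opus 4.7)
The plan is to compute $G(P)$ for a general point $P\in\PG(8,q^3)$ by exploiting the direct sum decomposition $\PG(8,q^3)=\Pit\oplus\Pit^q\oplus\Pit^{q^2}$, and to show that the result depends only on the $\Pit$-component of $P$. Since the transversal planes $\Pit,\Pit^q,\Pit^{q^2}$ are pairwise disjoint and together span $\PG(8,q^3)$, every $P$ may be written uniquely in the form
\[
P = (xA_0+yA_1+zA_2) + (x'A_0^q+y'A_1^q+z'A_2^q) + (x''A_0^{q^2}+y''A_1^{q^2}+z''A_2^{q^2})
\]
with $x,y,z,x',y',z',x'',y'',z''\in\Fqqq$. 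From Definition~\ref{def-aA}, the first three coordinates of $P$ are $x\mathsf a_i+x'\mathsf a_i^q+x''\mathsf a_i^{q^2}$ for $i=0,1,2$, with analogous expressions for the middle and last blocks of three coordinates.

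The key observation is that the polynomial $M(W)=\mathsf a_0+\mathsf a_1 W+\mathsf a_2 W^2$ factors as $-(W-\tau^q)(W-\tau^{q^2})$, which is immediate from the alternative expressions $\mathsf a_0=-\tau^q\tau^{q^2}$, $\mathsf a_1=\tau^q+\tau^{q^2}$, $\mathsf a_2=-1$ recorded in Definition~\ref{def-aA}. Applying the $q$-power Frobenius to each coefficient shifts the roots, so $\mathsf a_0^q+\mathsf a_1^q W+\mathsf a_2^q W^2=-(W-\tau^{q^2})(W-\tau)$ and $\mathsf a_0^{q^2}+\mathsf a_1^{q^2} W+\mathsf a_2^{q^2} W^2=-(W-\tau)(W-\tau^q)$. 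Writing $T=-(\tau-\tau^q)(\tau-\tau^{q^2})\neq 0$, evaluation at $W=\tau$ gives $M(\tau)=T$ while $\mathsf a_0^q+\mathsf a_1^q\tau+\mathsf a_2^q\tau^2=0=\mathsf a_0^{q^2}+\mathsf a_1^{q^2}\tau+\mathsf a_2^{q^2}\tau^2$. Consequently the three ``collapsed'' coordinates $x_0+x_1\tau+x_2\tau^2$, $y_0+y_1\tau+y_2\tau^2$, $z_0+z_1\tau+z_2\tau^2$ of $P$ reduce to $(xT,yT,zT)$ and depend only on the $\Pit$-component of $P$.

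Combining this with the identity $G(P)=\bar F(x_0+x_1\tau+x_2\tau^2,y_0+y_1\tau+y_2\tau^2,z_0+z_1\tau+z_2\tau^2)$ from Lemma~\ref{lem:var1} and the homogeneity of $\bar F$ of degree $k$ yields $G(P)=T^k\bar F(x,y,z)$. Since $T\neq 0$, $P\in V(G)$ if and only if $\bar F(x,y,z)=0$, a condition depending only on the $\Pit$-component $xA_0+yA_1+zA_2$ of $P$. This vanishes either when that component is zero (so $P\in\langle\Pit^q,\Pit^{q^2}\rangle$) or when, by Lemma~\ref{lem:tp}, the component represents a point of $\K$ in $\Pit$. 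This is exactly the description of the cone with vertex $\langle\Pit^q,\Pit^{q^2}\rangle$ and base $V(G)\cap\Pit=\K$. The principal obstacle is spotting the factorisation of $M$ via the conjugates $\tau^q,\tau^{q^2}$; once this is in place everything else follows by direct substitution and a homogeneity rescaling.
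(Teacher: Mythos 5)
Your proof is correct and follows essentially the same route as the paper: both decompose a general point of $\PG(8,q^3)$ over the three transversal planes and use the vanishing of $\mathsf a_0^q+\mathsf a_1^q\tau+\mathsf a_2^q\tau^2$ and $\mathsf a_0^{q^2}+\mathsf a_1^{q^2}\tau+\mathsf a_2^{q^2}\tau^2$ to show that the collapsed coordinates, and hence $G(P)$, depend only on the $\Pit$-component. Your derivation of that identity from the factorisation $\mathsf a_0+\mathsf a_1W+\mathsf a_2W^2=-(W-\tau^q)(W-\tau^{q^2})$ is a clean way to obtain what the paper dismisses as ``straightforward manipulation,'' but the argument is otherwise the same.
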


\begin{proof}
We first determine how the variety $V(G)$ meets the transversal plane $\Pit$.   By Lemma~\ref{lem:tp}, 
a point $Q$ in the transversal plane $\Pit$ has coordinates 
$$Q=xA_0+yA_1+zA_2=(x\mathsf a_0,x\mathsf a_1,x\mathsf a_2,\ y\mathsf a_0,y\mathsf a_1,y\mathsf a_2,\ z\mathsf a_0,z\mathsf a_1,z\mathsf a_2)$$ for some $x,y,z\in\Fqqq$, with   
 $A_i$ and $\mathsf a_i$ as  in Definition~\ref{def-aA}. Moreover, the points of $\Pit$ are in one-to-one correspondence with the points of $\PG(2,q^3)$ and 
 $Q$ corresponds to the point $\bar Q$ of $\PG(2,q^3)$ where
$$\bar Q
= (x,y,z).$$
By Lemma~\ref{lem:var1}, $G(Q)=\bar F (\bar Q)$. So 
the point  $Q$ of $\Pit$ lies in the variety $ V(G)$ if and only if the point $\bar Q$ of $\PG(2,q^3)$ lies in $V(\bar F)=\bar \K$. 
That is, $Q\in V(G) \cap\Pit$ if and only if $\bar Q\in \bar \K$, and so  
 $V(G)\cap \Pit=\K$ as required. 
 
 We now determine the base of $V(G)$. 
 As $V(G)$ does not contain the plane $\Pit$, the maximum dimension of the singular space of $V(G)$ is five. We show that the 5-space 
$\langle \Pit^q,\Pit^{q^2} \rangle$ is the singular space of $V(G)$ by showing that every point of $V(G)$ lies on a  line joining a point $Q\in\K$ to a point   $R\in\langle \Pit^q,\Pit^{q^2} \rangle$. 
Let $Q\in\K$, $\ R\in \langle \Pit^q,\Pit^{q^2} \rangle$ and $P\in QR$. By Lemma~\ref{lem:tp}, $\Pit=\langle A_0,A_1,A_2\rangle$, $\Pit^q=\langle A_0^q,A_1^q,A_2^q\rangle$ and $\Pit^{q^2}=\langle A_0^{q^2},A_1^{q^2},A_2^{q^2}\rangle$. So $P$ has homogeneous coordinates of form 
 $$P=
xA_0+yA_1+zA_2+ rA_0^{q}+sA_1^{q}+tA_2^{q}+uA_0^{q^2}+vA_1^{q^2}+wA_2^{q^2},$$ for some  $x,y,z,r,s,t,u,v,w\in\Fqqq$.  
Simplifying $P$ (using the coordinates for $A_i$ and $\mathsf a_i$  as  in Definition~\ref{def-aA}) we calculate    the first three coordinates of $P$ are $$(x{\mathsf a}_0+r{\mathsf a}_0^q+u{\mathsf a}_0^{q^2},\ x{\mathsf a}_1+r{\mathsf a}_1^q+u{\mathsf a}_1^{q^2},\  x{\mathsf a}_2+r{\mathsf a}_2^q+u{\mathsf a}_2^{q^2}).$$ 
As in the proof of Lemma~\ref{lem:var2}, recall that $P$ corresponds to a unique point of $\PG(2,q^3)$ which we denote by $\bar  X_P$, and  the first coordinate of   $\bar X_P$   is $$x{\mathsf a}_0+r{\mathsf a}_0^q+u{\mathsf a}_0^{q^2}\, +\,  (x{\mathsf a}_1+r{\mathsf a}_1^q+u{\mathsf a}_1^{q^2})\tau\,+\,   (x{\mathsf a}_2+r{\mathsf a}_2^q+u{\mathsf a}_2^{q^2})\tau ^2.$$
Straightforward manipulation shows that $$\mathsf {\mathsf a}_0^q+\tau{\mathsf a}_1^q+\tau^2{\mathsf a}_2^q={\mathsf a}_0^{q^2}+\tau{\mathsf a}_1^{q^2}+\tau^2{\mathsf a}_2^{q^2}=0,$$ and using this we simplify the first coordinate of $\bar X_P$ to $
({\mathsf a}_0+\tau{\mathsf a}_1+\tau^2{\mathsf a}_2)\, x$. Similarly we calculate the other coordinates of $P$,  and  the coordinates of $\bar X_P$ are $$\bar X_P = ({\mathsf a}_0+\tau{\mathsf a}_1+\tau^2{\mathsf a}_2)\,  \big(x,y,z\big) \, \equiv\, (x,y,z) = \bar Q.$$
By Lemma~\ref{lem:var1}, $G(P)=\bar F(\bar X_P)$, so
 $P$ lies in the variety $V(G)$ if and only if the point $\bar X_P=\bar Q$ lies in $\bar \K$. 
That is, if $P$ is on a line joining $Q\in\Pit$ with a point $R$ of $\langle \Pit^q,\Pit^{q^2} \rangle$, then
$G(P)=0$ if and only if $\bar F(\bar Q)=0$. That is, $P\in V(G)$ if and only if $\bar Q\in\bar \K$ if and only if $Q\in\K$. 
  Hence $V(G)$ is a cone with base $\K$  and vertex $\langle \Pit^q,\Pit^{q^2} \rangle$. 
\end{proof}

\begin{lemma}\Label{lem:var4}
Let $\bar F(x,y,z)$ be a homogeneous equation of degree $k$ over $\Fqqq$, and let $\bar\K=V(\bar F)$ be the corresponding variety in $\PG(2,q^3)$ and let $\V(\Bo{\K})$ be the corresponding variety in $\PG(8,q)$ as defined in Lemma~\ref{lem:var2}. Then in $\PG(8,q^3)$, the pointset of $\V(\Bo{\K})\star$ is equivalent to the pointset of the planes $\{\langle X,Y^q,Z^{q^2}\rangle\st X,Y,Z\in \K\}$.
\end{lemma}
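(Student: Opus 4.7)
The plan is to combine Lemma~\ref{lem:var3}, which gives $\V(\Bo{\K})\star=V(G)\cap V(G^q)\cap V(G^{q^2})$, with Lemma~\ref{lem:var3.2} together with its Galois conjugates. Applying the Frobenius $x\mapsto x^q$ to the statement of Lemma~\ref{lem:var3.2}, and observing that the three transversal planes form a single Galois orbit $\{\Pit,\Pit^q,\Pit^{q^2}\}$ of length three, yields the analogous facts that $V(G^q)$ is a cone with base $\K^q\subset\Pit^q$ and vertex $\langle\Pit^{q^2},\Pit\rangle$, and that $V(G^{q^2})$ is a cone with base $\K^{q^2}\subset\Pit^{q^2}$ and vertex $\langle\Pit,\Pit^q\rangle$. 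The theorem then reduces to proving equality of the two pointsets via containment in each direction.

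For the first inclusion, I would fix $X,Y,Z\in\K$ and write a generic point of $\langle X,Y^q,Z^{q^2}\rangle$ as $P=aX+bY^q+cZ^{q^2}$ with $a,b,c\in\Fqqq$. Regrouping as $P=aX+(bY^q+cZ^{q^2})$ places $P$ on the line through the point $aX\in\K\cup\{0\}\subset\Pit$ and the vertex point $bY^q+cZ^{q^2}\in\langle\Pit^q,\Pit^{q^2}\rangle$, so by the cone structure $P\in V(G)$. Performing the same regrouping around $bY^q$ and around $cZ^{q^2}$ respectively gives $P\in V(G^q)$ and $P\in V(G^{q^2})$, hence $P\in\V(\Bo{\K})\star$. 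For the reverse inclusion, use that $\Pit,\Pit^q,\Pit^{q^2}$ are pairwise disjoint and span $\PG(8,q^3)$, so every $P\in\PG(8,q^3)$ admits a decomposition $P=X_0+Y_0+Z_0$ with $X_0\in\Pit$, $Y_0\in\Pit^q$, $Z_0\in\Pit^{q^2}$ (possibly with some summands zero). The projection of $P$ from the vertex $\langle\Pit^q,\Pit^{q^2}\rangle$ onto $\Pit$ is precisely $X_0$; Lemma~\ref{lem:var3.2} then forces $X_0=0$ or $X_0\in\K$. Symmetrically the conditions $P\in V(G^q)$ and $P\in V(G^{q^2})$ force $Y_0=0$ or $Y_0=Y^q$ with $Y\in\K$, and $Z_0=0$ or $Z_0=Z^{q^2}$ with $Z\in\K$. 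Hence $P$ lies in a plane $\langle X_0,Y^q,Z^{q^2}\rangle$ of the required form; any vanishing summand can be replaced by an arbitrary element of $\K$ without displacing $P$ from the resulting plane.

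The main technical obstacle is justifying the two Frobenius-conjugated forms of Lemma~\ref{lem:var3.2}; this should follow essentially automatically from the observation that the coordinate substitution defining $G$ in Lemma~\ref{lem:var1} commutes with the Frobenius action on coefficients, so conjugation cycles the equation $G$ and the transversal planes compatibly. A secondary concern is the bookkeeping for degenerate decompositions where one or more of $X_0,Y_0,Z_0$ vanishes, but each such $P$ already lies in the vertex of one of the three cones and is contained in any plane $\langle X,Y^q,Z^{q^2}\rangle$ obtained by padding with arbitrary elements of $\K$, so the choice of padding is immaterial.
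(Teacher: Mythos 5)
Your proposal is correct and follows essentially the same route as the paper: apply Lemma~\ref{lem:var3.2} together with its two Frobenius conjugates to see $V(G)$, $V(G^q)$, $V(G^{q^2})$ as cones with bases in the three transversal planes and complementary 5-space vertices, then intersect; you merely make explicit the two inclusions (via the decomposition $P=X_0+Y_0+Z_0$ coming from $\Pit\oplus\Pit^q\oplus\Pit^{q^2}$) that the paper leaves as an assertion. Incidentally, your conjugation bookkeeping ($V(G^q)=V(G)^q$ a cone with base $\K^q\subset\Pit^q$ and vertex $\langle\Pit^{q^2},\Pit\rangle$) is the correct one; the paper's proof transposes the bases of $V(G^q)$ and $V(G^{q^2})$, a harmless slip since the triple intersection is unaffected.
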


\begin{proof}
By  Lemma~\ref{lem:var3.2}, $V(G)$ is a cone with base $V(G) \cap\Pit=\K$  and vertex $\langle \Pit^q,\Pit^{q^2} \rangle$. Hence 
 $V(G^q)$ is a cone with base $\K^{q^2}$ in $\Pit^{q^2}$ and vertex $\langle \Pit^q,\Pit \rangle$; and $V(G^{q^2})$ is a cone with base $\K^{q}$  and vertex $\langle \Pit,\Pit^{q^2} \rangle$. 
 Note that each of the three cones is a set of T-planes. 
 Hence the intersection of the three cones $V(G)$, $ V(G^{q})$, $  V(G^{q^2})$ is the set of points lying on the T-planes 
$$\{\langle X,Y^q,Z^{q^2}\rangle\st X,Y,Z\in \K\}$$ as required.
\end{proof}

Summarising the results of this section we have proved the following  Theorem.

\begin{theorem}\Label{lem:cone}
Let $\bar F(x,y,z)$ be a homogeneous equation of degree $k$ over $\Fqqq$, and let $\bar\K=V(\bar F)$ be the corresponding variety in $\PG(2,q^3)$. Using $\tau$ as in Section~\ref{sec:coord2}, we write  $x=x_0+\tau x_1+\tau^2 x_2$, $y=y_0+\tau y_1+\tau^2 y_2$, $z=z_0+\tau z_1+\tau^2 z_2$ for unique $x_i,y_i,z_i\in\Fq$. Expanding and simplifying yields $$\bar F(x,y,z)=G(x_0,x_1,x_2,y_0,y_1,y_2,z_0,z_1,z_2)= f_0+\tau f_1+\tau^2 f_2$$ where $f_i=f_i(x_0,x_1,x_2,y_0,y_1,y_2,z_0,z_1,z_2)$ is a homogeneous equation of degree $k$ over $\Fq$, $i=0,1,2$. 
Consider the Bose representation of $\PG(2,q^3)$. 
\begin{enumerate}
\item In $\PG(8,q)$, the pointset of $\Bo{\K}$ 
forms a variety  $V(f_0,f_1,f_2)$ which we denote  by $\V(\Bo{\K})$.
\item In $\PG(8,q^3)$, 
$$\V(\Bo{\K})\star=
 V(G,G^q,G^{q^2})$$ 
and the pointset of $\V(\Bo{\K})\star$ is equivalent to the pointset of the planes $\{\langle X,Y^q,Z^{q^2}\rangle\st X,Y,Z\in \K\}$.
 
\end{enumerate}
\end{theorem}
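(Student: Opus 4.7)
The plan is to read this theorem as an organisational summary of the preceding chain Lemma~\ref{lem:var1}--Lemma~\ref{lem:var4}, so the proof consists of invoking those lemmas in sequence and matching their conclusions to the two numbered claims. No new calculation is required; the only work is stringing the pieces together cleanly and citing the correct lemma for each piece.

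First I would appeal to Lemma~\ref{lem:var1} to produce the decomposition $\bar F(x,y,z) = G(x_0,\ldots,z_2) = f_0 + \tau f_1 + \tau^2 f_2$ with each $f_i$ a homogeneous form of degree $k$ over $\Fq$; this pins down the notation used in the statement of the theorem. For part (1), Lemma~\ref{lem:var2} applies directly: it establishes that a point $Q\in\PG(8,q)$ satisfies $f_0(Q)=f_1(Q)=f_2(Q)=0$ if and only if the corresponding point $\bar X_Q\in\PG(2,q^3)$ lies in $\bar\K$, and moreover that the whole Bose plane $\Bo{P}$ is contained in $V(f_0,f_1,f_2)$ whenever $\bar P\in\bar\K$. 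Together these identify the pointset of $\Bo{\K}$ with $V(f_0,f_1,f_2)$, which we name $\V(\Bo{\K})$.

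For part (2), the equality $\V(\Bo{\K})\star=V(G,G^q,G^{q^2})$ is Lemma~\ref{lem:var3}: extending to $\PG(8,q^3)$ preserves the variety cut out by the $\Fq$-forms $f_0,f_1,f_2$, and since the $3\times 3$ transition matrix between $\{f_0,f_1,f_2\}$ and $\{G,G^q,G^{q^2}\}$ is Vandermonde in $1,\tau,\tau^2$ over $\Fqqq$ and hence invertible, the two systems cut out the same pointset of $\PG(8,q^3)$. The geometric description as the union of T-planes $\langle X,Y^q,Z^{q^2}\rangle$ with $X,Y,Z\in\K$ is the content of Lemma~\ref{lem:var4}; its proof in turn rests on Lemma~\ref{lem:var3.2}, which identifies $V(G)$ as a cone with base $\K\subset\Pit$ and vertex $\langle \Pit^q,\Pit^{q^2}\rangle$, with analogous cone structures for $V(G^q)$ and $V(G^{q^2})$ over $\K^{q^2}$ and $\K^q$ respectively, so that intersecting the three cones picks out exactly the promised family of T-planes.

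Since every ingredient has already been proved, there is no genuine obstacle. The only point requiring care is that the geometric description in part (2) should be credited to Lemma~\ref{lem:var4} rather than claimed as a direct consequence of the equations $G=G^q=G^{q^2}=0$, because it genuinely uses the cone interpretation of each $V(G^{q^i})$ and the observation that each cone consists entirely of T-planes, so that their intersection is itself a union of T-planes of the stated form.
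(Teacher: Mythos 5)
Your proposal is correct and matches the paper exactly: the paper presents this theorem as a summary of Lemmas~\ref{lem:var1}--\ref{lem:var4}, with part (1) coming from Lemma~\ref{lem:var2} and part (2) from Lemmas~\ref{lem:var3} and~\ref{lem:var4} (the latter resting on the cone description in Lemma~\ref{lem:var3.2}). No further comment is needed.
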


\subsection{A Bose representation convention regarding variety-extensions}

Using the notation of Theorem~\ref{lem:cone}, we have the pointset of the Bose representation of the variety $V(\bar F)$ of $\PG(2,q^3)$ is the variety $V(f_0,f_1,f_{2})$ of $\PG(8,q)$. Note that this pointset may be the pointset of more than one variety of $\PG(8,q)$. In particular, as $a^{q+r}=a^{1+r}$, for all $a \in \Fq$, where $r \ge 0$, if the form $f_i$ of degree $d$ only contains terms having an indeterminate raised to a power greater than or equal to $q$, then reducing these exponents by $q-1$ yields a homogeneous form $g_i$ of degree $d-q+1$ such that the evaluations of $f_i$ and $g_i$ over $\Fq$ are equal. Thus the pointsets of the varieties $V(f_0,f_1,f_{2})$ and $V(g_0,g_1,g_{2})$ will be equal in $\PG(8,q)$. Note, however, that these evaluations may not be equal over the extension $\Fqqq$ and consequently  these varieties may not yield the same pointset when extended to the extended projective space $\PG(8,q^3)$. 

As this is an important notion, we illustrate this with two examples in 
the well known Bose representation  of $\PG(2,q^2)$ in $\PG(5,q)$.

\emph{Example 1} Let $\bar \U$ be a classical unital of $\PG(2,q^2)$. So $\bar  \U$ is projectively equivalent to the variety $V(\bar F)$ where $\bar F(x,y,z)=x^{q+1}+y^{q+1}+z^{q+1}$. Let $\tau \in \Fqq$ have minimal polynomial $x^2-t_1x-t_0$. Let $x_0,x_1,y_0,y_1,z_0,z_1$ be indeterminates and substitute $x=x_0+x_1\tau,y=y_0+y_1\tau,z=z_0+z_1\tau$ in $\bar F$ to obtain the form $$G(x_0,x_1,y_0,y_1,z_0,z_1)=(x_0+x_1\tau)^{q+1}+(y_0+y_1\tau)^{q+1}+(z_0+z_1\tau)^{q+1}.$$
As $\tau^q=t_1-\tau$ and $\tau\tau^q=-t_0$, this becomes
$G(x_0,x_1,y_0,y_1,z_0,z_1)=x_0^{q+1}+\tau x_0^qx_1+(t_1-\tau)x_0x_1^q-t_0x_1^{q+1}+y_0^{q+1}+\tau y_0^qy_1+(t_1-\tau)y_0y_1^q)-t_0y_1^{q+1}
+z_0^{q+1}+\tau z_0^qz_1+(t_1-\tau)z_0z_1^q)-t_0z_1^{q+1}.$
It follows that $$f_0(x_0,x_1,y_0,y_1,z_0,z_1)=x_0^{q+1}+t_1x_0x_1^q-t_0x_1^{q+1}+y_0^{q+1}+t_1y_0y_1^q-t_0y_1^{q+1}+z_0^{q+1}+t_1z_0z_1^q-t_0z_1^{q+1}$$
and
$$f_1(x_0,x_1,y_0,y_1,z_0,z_1)=
-x_0x_1^q+x_0^qx_1-y_0y_1^q+y_0^qy_1-z_0z_1^q+z_0^qz_1.$$
We now consider the corresponding forms $g_i$ where the exponents of the forms $f_i$ are reduced by $q-1$, giving $$g_0(x_0,x_1,y_0,y_1,z_0,z_1)=x_0^2+t_1x_0x_1-t_0x_1^2+y_0^2-t_0y_1^2+z_0^2-t_0z_1^2$$
and
$$g_1(x_0,x_1,y_0,y_1,z_0,z_1)=0.$$
The forms $f_0,f_1$ have degree $q+1$, while  the form $g_0$ has degree $2$ and $g_1$ is identically zero. 
As we have two varieties $\Q=V(g_0,g_1)$ and $\V=V(f_0,f_1)$ covering the same pointset of $\PG(5,q)$, there are two variety-extensions $\Qstar$ and $\Vstar$ to $\PG(5,q^2)$.  

There are two possible ways to consider the extension. 
 The standard way used in the literature to extend (see \cite{UnitalBook}) is to consider the variety of $\PG(5,q)$  corresponding to a unital to be the elliptic quadric $\Q=V(g_0,g_1)=V(g_0)$.
 The  variety-extension of this elliptic quadric to $\PG(5,q^2)$ is a hyperbolic quadric that contains the transversal plane $\Pit$ of the Bose representation.
 Alternatively,  we can consider the variety of $\PG(5,q)$ corresponding to a unital to be the 
variety $\V=V(f_0,f_1)$. 
By Theorem~\ref{lem:cone}, the extension of the variety $\V=V(f_0,f_1)$ to $\PG(5,q^2)$ is a variety  which consists of the points on the lines $XY^q$ for points $X,Y \in \U$ (where $\U$ is the classical unital in the transversal plane $\Pit$ that corresponds to $\bar\U$). 
That is, the variety-extension  meets  the transversal plane $\Pit$ in a unital.

%
%

\emph{Example 2} 
Consider the Bose representation of the subplane $\bar\pi_0=\PG(2,q)$ of $\PG(2,q^2)$.
Let $\bar F_0(x,y,z)=xy^q-x^qy$, $\bar F_1(x,y,z)=yz^q-y^qz$, $\bar F_2(x,y,z)=zx^q-z^qx$, then $\bar\pi_0$  may be described as the variety $V(\bar F_0,\bar F_1,\bar F_2)$. A similar analysis to Example 1 yields varieties $V(f_{i,j})$, $i=0,1,2$, $j=0,1$, with 
$f_{0,0}=x_0y_0^q-x_0^qy_0 +t_1(x_0y_1^q-x_1y^qy_0)-t_0(x_1y_1^q-x_1^qy_1)$, 
$f_{0,1}=x_0y_1^q+x_1^qy_0+x_1y_0^q-x_0^qy_1$, and so on. Hence reducing the exponents gives varieties  $V(g_{i,j})$, $i=0,1,2$, $j=0,1$, with 
$g_{0,0}=
t_1(x_0y_1-x_1y_0)$, $g_{0,1}=-2(x_0y_1-x_1y_0)$, and so on. 
 The forms defining the variety $\K=V(f_{0,0},f_{0,1},f_{1,0},f_{1,1},f_{2,0},f_{2,1})$ have degree $q+1$, and the forms defining the variety $\K'=V(g_{0,0},g_{0,1},g_{1,0},g_{1,1},g_{2,0},g_{2,1})$  are quadrics. The two varieties $\K,\K'$   have identical pointsets in $\PG(5,q)$, but their extensions to $\PG(5,q^2)$ differ. The extension of $\K$ to $\PG(5,q^2)$ meets the transversal plane $\Pit$ in the Baer subplane $\pi_0$, but the extension of $\K'$ (a Segre variety)  to $\PG(5,q^2)$ contains the transversal plane $\Pit$. The standard way used in the literature to extend  (see \cite{UnitalBook}) is using the variety $\K'=V(g_{0,0},g_{0,1},g_{1,0},g_{1,1},g_{2,0},g_{2,1})$.

 In Section~\ref{sec:lines-plane} we consider the Bose representation of an  $\Fq$-subplane and its relation to the Segre variety and its extension. In Section~\ref{sec:conic}, we consider an  $\Fq$-conic of  an  $\Fq$-subplane, and its representation on the Segre variety and the extended Segre variety.
 In light of this discussion, we are careful in this  work to describe the variety of $\PG(8,q)$ that we extend.

%
%
%
%
%

\subsection{Application to conics in the Bose representation}\Label{conic-sec}

We use Theorem~\ref{lem:cone} to show that a non-degenerate conic of $\PG(2,q^3)$ corresponds to the intersection of three quadrics in $PG(8,q)$. Further, we show that the points of the variety-extension lie on          a set of planes. 

\begin{theorem}\Label{Fqqq-conic-Bose} Let $\bar{ \O}$ be a non-degenerate conic  in $\PG(2,q^3)$. Consider the Bose representation.
\begin{enumerate}
\item In  $\PG(8,q)$, the pointset of  $\Bo{{\O}}$ coincides with the pointset of a variety $\VO=\Q_0\cap\Q_1\cap\Q_2$, where $\Q_1,\Q_2,\Q_3$ are quadrics of $\PG(8,q)$.
\item In $\PG(8,q^3)$, the variety-extension  $\VO\star=\Qzerostar\cap\Qonestar\cap\Qtwostar$ has pointset which coincides with the points  on the planes 
$\{\langle X,Y^{q},Z^{q^2}\rangle\st X, Y,Z\in\O\}.$
\item In $\PG(8,q^6)$, the variety-extension  $\VO\blackstar=\Qzerostarstar\cap\Qonestarstar\cap\Qtwostarstar$ has pointset which coincides with the points  on the planes 
$\{\langle X,Y^{q},Z^{q^2}\rangle\st X, Y,Z\in\O\blackstar\}$, where $\O\blackstar$ is the quadratic extension of the conic $\O$ to $\Pit\blackstar\subset\PG(8,q^6)$. 
\end{enumerate}
\end{theorem}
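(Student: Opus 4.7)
The plan is to derive all three parts from Theorem~\ref{lem:cone}, applied to the quadratic form defining the conic, with the sextic-extension case requiring a careful rerun of the cone construction one field extension higher.

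For part (1): a non-degenerate conic $\bar\O$ of $\PG(2,q^3)$ is the zero set $V(\bar F)$ of some homogeneous quadratic form $\bar F(x,y,z)$ over $\Fqqq$. Applying Lemma~\ref{lem:var1} with $k=2$, I write $\bar F = f_0 + \tau f_1 + \tau^2 f_2$, where each $f_i$ is a homogeneous quadratic form over $\Fq$. Setting $\Q_i := V(f_i)$, each $\Q_i$ is a quadric of $\PG(8,q)$, and Theorem~\ref{lem:cone}(1) gives $\V(\Bo{\O}) = V(f_0,f_1,f_2) = \Q_0 \cap \Q_1 \cap \Q_2$, as required.

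For part (2): Theorem~\ref{lem:cone}(2) yields $\V(\Bo{\O})\star = V(G, G^q, G^{q^2})$. Since the forms $G$, $G^q$, $G^{q^2}$ are three linearly independent $\Fqqq$-combinations of $f_0, f_1, f_2$, the common zero set in $\PG(8,q^3)$ is unchanged by passing from one basis to the other, giving $V(G, G^q, G^{q^2}) = V(f_0)\star \cap V(f_1)\star \cap V(f_2)\star = \Qzerostar \cap \Qonestar \cap \Qtwostar$. The planar pointset description $\{\langle X, Y^q, Z^{q^2}\rangle : X,Y,Z \in \O\}$ then comes directly from the second assertion of Theorem~\ref{lem:cone} with $\K = \O$.

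For part (3): the strategy is to observe that Lemmas~\ref{lem:var2}, \ref{lem:var3}, \ref{lem:var3.2} and~\ref{lem:var4} depend only on algebraic identities involving $\tau$, $\mathsf a_0$, $\mathsf a_1$, $\mathsf a_2$ and $A_0$, $A_1$, $A_2$ together with the conjugacy $X \mapsto X^q$, all of which remain valid when the scalar field is enlarged from $\Fqqq$ to $\Fqqqqqq$. Since the defining forms $f_i$ lie over $\Fq$, the variety-extension to $\PG(8,q^6)$ is $\V(\Bo{\O})\blackstar = \Q_0\blackstar \cap \Q_1\blackstar \cap \Q_2\blackstar = V(G, G^q, G^{q^2})$, where the latter is now read in $\PG(8,q^6)$. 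Re-running the cone computation of Lemma~\ref{lem:var3.2} over $\Fqqqqqq$ shows that $V(G)\blackstar$ is a cone with base $\O\blackstar \subset \Pit\blackstar$ and vertex $\langle \Pit^q\blackstar, \Pit^{q^2}\blackstar \rangle$, and similarly for $V(G^q)\blackstar$ and $V(G^{q^2})\blackstar$. Intersecting the three cones (as in Lemma~\ref{lem:var4}) yields exactly the set of planes $\{\langle X, Y^q, Z^{q^2}\rangle : X, Y, Z \in \O\blackstar\}$.

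The main obstacle is more bookkeeping than conceptual: one must verify that the key identities $\mathsf a_0^q + \tau \mathsf a_1^q + \tau^2 \mathsf a_2^q = \mathsf a_0^{q^2} + \tau \mathsf a_1^{q^2} + \tau^2 \mathsf a_2^{q^2} = 0$ and the coordinate formula $G(P) = \bar F(\bar X_P)$ are all polynomial identities over $\Fqqq$ that persist under the quadratic extension to $\Fqqqqqq$, and that $\O\blackstar$ is consistently interpreted as the quadratic extension of the $\Fqqq$-conic $\O \subset \Pit$ to the $\Fqqqqqq$-conic in $\Pit\blackstar$, as specified in the statement of the theorem.
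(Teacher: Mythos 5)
Your proposal is correct and follows essentially the same route as the paper: parts (1) and (2) are read off from Theorem~\ref{lem:cone}, and part (3) is obtained by observing that the three cones $V(G)$, $V(G^q)$, $V(G^{q^2})$ extend to cones over $\Fqqqqqq$ with bases $\O\blackstar$, $(\O\blackstar)^{q^2}$, $(\O\blackstar)^{q}$ and the same vertices, whose intersection is the stated set of T-planes. The only cosmetic difference is that the paper asserts the persistence of the cone structure under the quadratic extension directly, whereas you propose to re-run the computation of Lemma~\ref{lem:var3.2} over $\Fqqqqqq$; both amount to the same verification.
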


\begin{proof}
 Let $\bar{ \O}$ be a non-degenerate conic  in $\PG(2,q^3)$, so $\bar \O$ is the set of points satisfying a homogeneous equation $F(x,y,z)=0$ of degree 2 over $\Fqqq$. As in Theorem~\ref{lem:cone}, we can write $F(x,y,z)=f(x_0,x_1,x_2,y_0,y_1,y_2,z_0,z_1,z_2)
=f_0+ f_1\tau+ f_2\tau^2$ where $f_0,f_1,f_2$ are homogeneous equations of degree 2 over $\Fq$.  
Hence the set of points of $\PG(8,q)$ satisfying $f_i=0$ form a quadric denoted $\Q_i$, $i=0,1,2$. 
Further, 
by Theorem~\ref{lem:cone}(1),
in $\PG(8,q)$, the pointset of $\Bo{\O}$ form a variety $\V(\O)=\Q_0\cap\Q_1\cap\Q_2$.
The variety-extension to $\PG(8,q^3)$ is $\V(\O)\star=\Qzerostar\cap\Qonestar \cap\Qtwostar$, where $\Q_i\star$ is now a quadric of $\PG(8,q^3)$.  Moreover, $\V(\O)\star$ is   also defined by any three independent quadrics in the set $\lambda_0\Q_0+\lambda_1\Q_1+\lambda_2\Q_2$ with $\lambda_0,\lambda_1,\lambda_2\in\Fqqq$ (where $\lambda_0\Q_0+\lambda_1\Q_1+\lambda_2\Q_2$ denotes the quadric with homogeneous equation $\lambda_0f_0+\lambda_1f_1+\lambda_2f_2$).
Consider the following three independent  quadrics of $\PG(8,q^3)$, $\T_0=
\Qzerostar +\tau\Qonestar +\tau^2\Qtwostar $, 
$\T_1=
\Qzerostar +\tau^q\Qonestar +\tau^{2q}\Qtwostar $,
$\T_2=
\Qzerostar +\tau^{q^2}\Qonestar +\tau^{2q^2}\Qtwostar $.
If we denote the equation of $\T_0$ as $g=f_0+\tau f_1+\tau^2 f_2$, then $\T_1$ has equation $g^q$ and $\T_2$ has equation $g^{q^2}$. 
By 
Theorem~\ref{lem:cone}, $\T_0$ is a cone with base $\O$ in $\
\Pit$ and vertex $\langle \Pit^q,\Pit^{q^2} \rangle$. 
 Similarly $\T_1$ is a cone with base $\O^q$ and vertex $\langle \Pit,\Pit^{q^2} \rangle$,  and $\T_2$ is a cone with base  $\O^{q^2}$ and vertex $\langle \Pit,\Pit^{q} \rangle$.  
The intersection of these three cones is the set of T-planes that contain a point of $\O$, a point of $\O^q$ and a point of $\O^{q^2}$, as in part 2.

For part 3, the extension of $\V(\O)$ to $\PG(8,q^6)$ is $\V(\O)\blackstar=\T_0\blackstar\cap\T_1\blackstar\cap\T_2\blackstar$. 
In $\PG(8,q^3)$,  the quadric $\T_0$ is a cone with base $\O$ in $\
\Pit$ and vertex $\langle \Pit^q,\Pit^{q^2} \rangle$. Hence the   quadric extension $\T_0\blackstar$ is a cone with base $\O\blackstar$ in $\
\Pit\blackstar$ and vertex $\langle \Pit^q,\Pit^{q^2} \rangle\blackstar=\langle (\Pit^q)\blackstar,(\Pit^{q^2} )\blackstar\rangle$. 


We can similarly describe the quadrics $\T_1\blackstar,\T_2\blackstar$ as cones. The intersection of these three cones is the set of T-planes that contain a point of $\O\blackstar$, a point of $(\O^q)\blackstar=(\O\blackstar)^q$ and a point of $(\O^{q^2})\blackstar=(\O\blackstar)^{q^2}$. That is, the set of planes of form $\langle X,Y^q,Z^{q^2}\rangle$ where $ X,Y,Z\in \O\blackstar$, proving part 3.
\end{proof}

   \section{Sublines and
   subplanes  in the Bose representation}\Label{sec:lines-plane}

In this section we determine  the Bose representation of  $\Fq$-sublines and  $\Fq$-subplanes   of $\PG(2,q^3)$. We show in Section~\ref{sec:subline} that an $\Fq$-subline of $\PG(2,q^3)$ corresponds to a 2-regulus contained in a 5-space; and that an $\Fq$-subplane corresponds to a Segre variety $\S_{2;2}$. Moreover, we later wish to characterise $\Fq$-conics, and in order to do so, we  describe   in Section~\ref{sec6.2}  the extensions of a 2-regulus and a $\S_{2;2}$ to $\PG(8,q^3)$ and $\PG(8,q^6)$. 
     
     \subsection{$\Fq$-sublines  and subplanes in $\PG(8,q)$}\Label{sec:subline}

  Let $\bar \pi$ be an $\Fq$-subplane of  $\PG(2,q^3)$, we show that the  Bose representation is a Segre variety.

\begin{theorem}\Label{pi0-Bose} Let $\bar \pi$ be a $\Fq$-subplane of $\PG(2,q^3) $ then  in $\PG(8,q)$, the  planes $\{\Bo{X}\st X\in\pi\}$ 
of the Bose representation $\Bo{\pi}$  form one system of maximal spaces of a Segre variety $\S_{2;2}$. 
  \end{theorem}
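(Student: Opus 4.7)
The plan is to reduce to the case $\bar\pi=\bar\pi_0=\PG(2,q)$ via a homography of $\PG(2,q^3)$, and then to compute the Bose planes of $\bar\pi_0$ directly from Lemma~\ref{lem:coordX}, identifying them as one system of maximal subspaces of a standard Segre variety.

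For the reduction, any two $\Fq$-subplanes of $\PG(2,q^3)$ are projectively equivalent under $\PGL(3,q^3)$. A homography $\phi\in\PGL(3,q^3)$ is $\Fqqq$-linear, hence $\Fq$-linear on $\Fqqq^3\cong\Fq^9$, so it lifts to an element $\Phi\in\PGL(9,q)$ that preserves the Bose spread $\mathbb S$ and sends $\Bo{X}$ to $\Bo{\phi(X)}$. Since Segre varieties and their systems of maximal subspaces are $\PGL(9,q)$-invariant, it suffices to prove the theorem for $\bar\pi_0=\PG(2,q)$.

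For $\bar X=(x,y,z)\in\PG(2,q)$ with $x,y,z\in\Fq$, substituting $x_0=x$, $x_1=x_2=0$ (and analogously for $y$ and $z$) into (\ref{eqnx0})--(\ref{eqnx2}) gives
\[
X_0=(x,0,0,y,0,0,z,0,0),\ \ X_1=(0,x,0,0,y,0,0,z,0),\ \ X_2=(0,0,x,0,0,y,0,0,z),
\]
so by Lemma~\ref{lem:coordX} the general point of $\Bo{X}$ is
\[
aX_0+bX_1+cX_2=(ax,\,bx,\,cx,\,ay,\,by,\,cy,\,az,\,bz,\,cz), \qquad [a{:}b{:}c]\in\PG(2,q).
\]
This is precisely the image under the standard Segre embedding $\PG(2,q)\times\PG(2,q)\to\PG(8,q)$ with the first factor held fixed at $[x{:}y{:}z]$ and the second factor varying; hence each $\Bo{X}$ is a plane of one system of maximal subspaces of a Segre variety $\S_{2;2}$. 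As $\bar X$ ranges over the $q^2+q+1$ points of $\PG(2,q)$, we obtain exactly the $q^2+q+1$ planes of that system.

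The delicate point will be the reduction step: one must verify that a lift $\Phi\in\PGL(9,q)$ of $\phi\in\PGL(3,q^3)$ really does map Bose planes to Bose planes according to the action of $\phi$ on $\PG(2,q^3)$, independently of the choice of representative in $\GL(3,q^3)$. As an alternative that avoids this functoriality argument, one can pick a quadrangle $\bar X_1,\ldots,\bar X_4$ in $\bar\pi$, observe that the non-collinearity of the triples $X_i,X_i^q,X_i^{q^2}$ in $\Pit,\Pit^q,\Pit^{q^2}$ forces any three of $\Bo{X_1},\ldots,\Bo{X_4}$ to span $\PG(8,q)$, apply Lemma~\ref{lem:HT2} to obtain the unique Segre variety $\S_{2;2}$ through these four planes, and then use the same coordinate check to verify that the remaining $q^2+q-3$ Bose planes of $\bar\pi$ lie in the same maximal system.
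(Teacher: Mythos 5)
Your proposal is correct, but it takes a genuinely different route from the paper. You reduce to the standard subplane $\bar\pi_0=\PG(2,q)$ by lifting a homography of $\PGL(3,q^3)$ to an $\Fq$-linear collineation of $\PG(8,q)$ that respects the Bose correspondence, and then exhibit $\Bo{\pi_0}$ explicitly as the ruling $\{[x{:}y{:}z]\}\times\PG(2,q)$ of the standard Segre embedding; your coordinate computation from Lemma~\ref{lem:coordX} is accurate, and the lift is sound (a representative $M\in\GL(3,q^3)$ maps the $\Fqqq$-span of a vector to the $\Fqqq$-span of its image, so $\Bo{X}\mapsto\Bo{\phi(X)}$ regardless of the scalar chosen). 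The paper instead argues synthetically with the given subplane: it picks a quadrangle in $\pi$, invokes Lemma~\ref{lem:HT2} to obtain the unique $\S_{2;2}$ through the four Bose planes, and then identifies the whole system $\R'$ with $\Bo{\pi}$ by extending to $\PG(8,q^3)$ and using the transversal plane $\Pit$ --- each plane of $\R'$ meets $\Pit$ in a point $A$ and is therefore $\Bo{A}$, and the points so obtained form an $\Fq$-subplane containing the quadrangle, hence equal to $\pi$. Your approach buys concreteness and brevity (the Segre structure is visible in coordinates), at the price of relying on $\PGL(3,q^3)$-transitivity on $\Fq$-subplanes (which is how the paper defines them, so this is harmless) and on the functoriality of the lift; the paper's approach buys a coordinate-free identification of the second ruling with the Bose planes via the transversal plane, a mechanism it reuses in later sections, and it concentrates all coordinate work inside Lemma~\ref{lem:HT2}. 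Your closing ``alternative'' is essentially the paper's argument, but note that as you state it the final step still leans on the coordinate reduction: to show the remaining $q^2+q-3$ Bose planes lie in the same system without reducing to $\bar\pi_0$, you would need something like the paper's transversal-plane identification rather than ``the same coordinate check''.
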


\begin{proof} The $\Fq$-subplane $\bar\pi$ corresponds to an $\Fq$-subplane $\pi$ of  the transversal plane $\Pit$ in $\PG(8,q^3)$. Let $P_1,P_2,P_3,P_4$ be a quadrangle of $\pi$, then  the corresponding Bose planes $\Bo{P_i}=\langle P_i,P_i^q,P_i^{q^2}\rangle\cap\PG(8,q)$, $i=1,\ldots,4$ are four planes of $\PG(8,q)$, any three of which generate $\PG(8,q)$. 
By Lemma~\ref{lem:HT2} there is a unique Segre variety $\V=\S_{2;2}$. Denote the two systems of maximal spaces by  $\R,\R'$, with $\Bo{P_1},\ldots,\Bo{P_4}\in\R'$. We will  show that $\R'=\{\Bo{X}\st X\in\pi\}$.  

Consider the extension of the Segre variety $\V$ to a Segre variety $\Vstar$ of $\PG(8,q^3)$. Denote the two systems of maximal spaces (planes) of $\V\star$ by $\R\star$ and $(\R')\star$. So $\Bo{P_i}\star\in(\R')\star$, $i=1,\ldots,4$, and for $\pi_j\in\R$, we have $\pi_j\star\in\R\star$. As $\Pit$ is a plane of $\PG(8,q^3)$ that meets $\Bo{P_i}\star\in(\R')\star$, $i=1,\ldots,4$, we have $\Gamma\in\R\star$. Let $\alpha$ be a plane of $\R'$, then in $\PG(8,q^3)$, $\alpha\star$ is a plane of $(\R')\star$, and so $\alpha$ meets $\Pit$ in a point $A$. As $\alpha$ is a plane of $\PG(8,q)$, the points $A^q$ and $A^{q^2}$ lie in $\alpha$, and so $\alpha=\langle A,A^q,A^{q^2}\rangle\cap\PG(8,q)=\Bo{A}$.

As $\V$ is a Segre variety, the planes in $\R'$ are a scroll ruled by a homography, hence the set of planes $\{\alpha\star\st \alpha\in\R'\}$ of $\PG(8,q^3)$ are ruled by a homography. 
Thus  the set of points $\{\alpha\star\cap \Pit\st \alpha\in\R'\}$ form an $\Fq$-subplane $\pi'$ of $\Pit$. As both $\pi$ and $\pi'$ contain the quadrangle $P_1,\ldots,P_4$, we have $\pi'=\pi$.   
  Hence $\R'$ is the set of Bose planes $\Bo{X}$ for $X\in\pi$. 
 \end{proof}

  We use this to determine the Bose representation of an $\Fq$-subline. 
      Let $\bar b$ be an $\Fq$-subline of the line $\bar{\ell_b}$ in $\PG(2,q^3)$. In  
      the Bose representation in $\PG(8,q)$, we have $\Bo{b}=\{\langle Y,Y^q,Y^{q^2}\rangle \st Y\in b\}\cap\PG(8,q)$.  
      We show that this set of $q+1$ planes forms a 2-regulus. 

\begin{theorem}\Label{subline-simple} Let $\bar b$ be a $\Fq$-subline  lying on the line $\bar {\ell_b}$ of $\PG(2,q^3)$, then the planes of $\lbose b\rbose$ form a 2-regulus of
the   5-space $\Pi_{b}=\langle \ell_b,\ell_b^q,\ell_b^{q^2}\rangle\cap\PG(8,q)$.\end{theorem}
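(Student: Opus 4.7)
The plan is to reduce the claim to the subplane case already handled in Theorem~\ref{pi0-Bose}, by embedding the $\Fq$-subline $\bar b$ as a line of an $\Fq$-subplane.

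First, I would observe that any $\Fq$-subline $\bar b$ of $\PG(2,q^3)$ arises as a line of some $\Fq$-subplane $\bar\pi$ of $\PG(2,q^3)$. This follows because all $\Fq$-sublines of $\PG(2,q^3)$ are projectively equivalent under $\PGL(3,q^3)$, and the standard $\Fq$-subplane $\PG(2,q)\subset\PG(2,q^3)$ already has $\Fq$-sublines as its lines; applying a collineation that sends one of those lines to $\bar b$ produces the required $\bar\pi$.

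Second, by Theorem~\ref{pi0-Bose} applied to $\bar\pi$, the Bose planes $\{\Bo{X}\st X\in\pi\}$ form one system $\R'$ of maximal planes of a Segre variety $\V=\S_{2;2}$ in $\PG(8,q)$. Since $b$ is a line of $\pi$, the set $\{\Bo{Y}\st Y\in b\}$ is a sub-collection of $q+1$ planes of $\R'$. Using the product structure $\S_{2;2}\cong\PG(2,q)\times\PG(2,q)$, restricting the first factor from $\PG(2,q)$ to the line $\PG(1,q)\cong b$ produces the Segre subvariety $\S_{1;2}\cong\PG(1,q)\times\PG(2,q)$, whose system of maximal planes is by definition a 2-regulus. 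Hence $\lbose b\rbose$ is a 2-regulus.

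Third, I would identify the ambient 5-space with $\Pi_b$. For any $Y\in b\subset\ell_b$ we have $\Bo{Y}\star=\langle Y,Y^q,Y^{q^2}\rangle\subset\langle\ell_b,\ell_b^q,\ell_b^{q^2}\rangle$. The three lines $\ell_b,\ell_b^q,\ell_b^{q^2}$ are pairwise skew (they lie in the pairwise skew transversal planes $\Pit,\Pit^q,\Pit^{q^2}$), so their span is a 5-space of $\PG(8,q^3)$. This span is Frobenius-stable, and therefore meets $\PG(8,q)$ in a 5-space, which is exactly $\Pi_b$; in particular $\Bo{Y}\subset\Pi_b$ for every $Y\in b$. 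Conversely, picking three distinct points of $b$ gives three distinct points on each of $\ell_b,\ell_b^q,\ell_b^{q^2}$ via Frobenius, so the extensions of the three Bose planes span $\langle\ell_b,\ell_b^q,\ell_b^{q^2}\rangle$; restricting to $\PG(8,q)$ shows that the three planes $\Bo{Y_i}$ already span $\Pi_b$. Thus the 2-regulus fills the 5-space $\Pi_b$, as required.

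The main obstacle is the second step: formally identifying the sub-collection of $\R'$ indexed by a line of $\pi$ with one system of maximal planes of an $\S_{1;2}\subset\S_{2;2}$. I would handle this either by appealing directly to the Segre product structure, or by coordinatising $\V$ explicitly along the lines of the proof of Lemma~\ref{lem:HT2} and checking that fixing the first-factor parameter to the line $b$ cuts out the standard coordinate description of $\S_{1;2}$ in a 5-space of $\PG(8,q)$.
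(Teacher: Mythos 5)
Your proposal is correct and follows essentially the same route as the paper: embed $\bar b$ as a line of an $\Fq$-subplane $\bar\pi$, apply Theorem~\ref{pi0-Bose}, and cut the resulting Segre variety $\S_{2;2}$ down to an $\S_{1;2}$ whose maximal planes are the $2$-regulus. The only cosmetic difference is in the final step, where the paper intersects the $\S_{2;2}$ with the $5$-space $\Pi_b$ and cites \cite[Thm 4.109]{HT-new}, whereas you restrict one factor of the Segre product structure to the line $b$; these yield the same $\S_{1;2}$.
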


 \begin{proof} 
  Let $\bar\pi$ be any $\Fq$-subplane of $\PG(2,q^3)$ which contains $\bar b$ as a line. In the Bose representation, $\pi$ is an $\Fq$-subplane of $\Pit$ that contains the $\Fq$-subline $b$. As $b=\pi\cap\ell_b$,
 $\Bo{b}=\Bo{\pi}\cap\Bo{\ell_b}$. These $q+1$ planes all lie in the 5-space 
   $\Pi_{b}=\langle \ell_b,\ell_b^q,\ell_b^{q^2}\rangle\cap\PG(8,q)$, so $\Bo{b}=\Bo{\pi}\cap\Pi_b$. By Theorem~\ref{pi0-Bose}, the planes of $\Bo{\pi}$   form one system of  maximal planes of a Segre variety $\S_{2;2}$. By \cite[Thm 4.109]{HT-new}, the 5-space $\Pi_b$ meets the $\S_{2;2}$ in a Segre variety $\S_{1;2}$. That is, the planes of $\Bo{b}$ are the system of maximal planes of a Segre variety $\S_{1;2}$, and so are a 2-regulus. 
\end{proof}

       \subsection{Extending to $\PG(8,q^3)$}\Label{sec6.2}

We now
look at the extension of the varieties in Theorem~\ref{pi0-Bose} and Theorem~\ref{subline-simple} to 
 $\PG(8,q^3)$ and $\PG(8,q^6)$. We recall two collineations of $\PG(8,q^6)$, namely 
$X=(x_0,\ldots,x_8)\longmapsto X^q=(x_0^q,\ldots,x_8^q)$ and  $\mathsf e\colon X=(x_0,\ldots,x_8)\mapsto X^{\mathsf e}=(x_0^{q^3},\ldots,x_8^{q^3})$.

\begin{theorem}\Label{subplane-star-3} Let $\bar \pi$ be an $\Fq$-subplane   of $\PG(2,q^3)$. In the Bose representation, let ${\mathsf c_\pi}$ be the  collineation of order 3 acting on the points of $\Pit$ which fixes $\pi$ pointwise as defined in (\ref{defcpi}). Let $\VB$ denote the Segre variety  $\S_{2;2}$ whose pointset coincides with the pointset of $\Bo{\pi}$. 
\begin{enumerate}
\item In  $\PG(8,q^3)$,
$\VB\star$ is a Segre variety, with one system of maximal spaces the planes   $\{ \lround X\rround_\pi\st X\in\Pit\}$ where 
 $$ \lround X\rround_\pi=\langle\, X,\ (\conjBsq{X})^q, \ (\conjB{X})^{q^2}\,\rangle.$$

\item In  $\PG(8,q^6)$,
$\VB\blackstar$ is a Segre variety,  with one system of maximal spaces the planes    $\{\lround X\rround_\pi\st X\in\Pit\blackstar\}$ where 
 $$\lround X\rround_\pi=\big\langle\, X, (X^{{\mathsf c}^2_\pi\mathsf e})^q, (X^{\mathsf c_\pi\mathsf e})^{q^2}\,\big\rangle=
 \big\langle\, X, (X^{{\mathsf c}^5_\pi})^q, (X^{\mathsf c^4_\pi})^{q^2}\,\big\rangle.$$
\end{enumerate}
 \end{theorem}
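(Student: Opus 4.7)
The plan is to first establish the theorem for the reference subplane $\bar\pi_0 = \PG(2,q)$ by a direct coordinate computation, then transport to a general $\Fq$-subplane $\bar\pi$ using an induced Bose collineation. By Theorem~\ref{pi0-Bose}, $\VB$ is a Segre variety $\S_{2;2}$, so $\VB\star$ is a Segre variety $\S_{2;2}$ in $\PG(8,q^3)$. Since $\Pit \subseteq \VB\star$ and meets each $\Bo{X}\star$ (for $X\in\pi$) in a single point, $\Pit$ is a plane of the \emph{other} system $\R\star$, and hence the planes of $(\R')\star$ are in bijection with the points of $\Pit$ via $\alpha\mapsto\alpha\cap\Pit$. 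The theorem then reduces to identifying, for each $X\in\Pit$, the plane $\lround X\rround_\pi$ with the unique plane of $(\R')\star$ through $X$. The case $X\in\pi$ is immediate: $X^{\cpi}=X$ gives $\lround X\rround_\pi = \langle X, X^q, X^{q^2}\rangle = \Bo{X}\star$, which lies in $(\R')\star$ and meets $\Pit$ at $X$.

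For the base case $\bar\pi_0$, I would work in the coordinates of Sections~\ref{sec:coord2} and~\ref{sec-ai}. The Segre variety $\V(\lbose\pi_0\rbose)\star$ has its system $(\R')\star$ parametrised by $b\in\PG(2,q^3)$ as $\pi'_b = \{(b_ic_j)_{i,j\in\{0,1,2\}} : c\in\PG(2,q^3)\}$; a short computation shows that $\pi'_b\cap\Pit$ corresponds to $c=(\mathsf a_0,\mathsf a_1,\mathsf a_2)$, yielding $X=b_0A_0+b_1A_1+b_2A_2$. Since $\mathsf c=\mathsf c_{\pi_0}$ acts on $\Pit$ by Frobenius on the $A_i$-coordinates and $b_i\in\Fqqq$ satisfies $b_i^{q^3}=b_i$, one computes $(X^{\mathsf c^2})^q = b_0A_0^q+b_1A_1^q+b_2A_2^q$, which is precisely the point of $\pi'_b$ at $c=(\mathsf a_0^q,\mathsf a_1^q,\mathsf a_2^q)$; the same argument handles $(X^{\mathsf c})^{q^2}$. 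These three points span $\pi'_b$ by the $\Fqqq$-linear independence of $(\mathsf a_0,\mathsf a_1,\mathsf a_2)$ and its two Frobenius conjugates, so $\lround X\rround_{\pi_0}=\pi'_b$.

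For general $\bar\pi$, let $\bar\beta\in\PGL(3,q^3)$ send $\bar\pi_0$ to $\bar\pi$ and let $\hat\beta\in\PGL(9,q)$ be the induced Bose collineation, extended to $\PG(8,q^3)$. Four facts drive the transfer: (i) $\hat\beta$ preserves the spread $\mathbb S$ and maps $\V(\lbose\pi_0\rbose)$ to $\VB$, sending one system to one system; (ii) as $\hat\beta$ commutes with ``multiplication by $\tau$'' on $\Fq^9$, it fixes each of the $\Fqqq$-eigenspaces $\Pit,\Pit^q,\Pit^{q^2}$; (iii) as $\hat\beta$ has entries in $\Fq$, it commutes with the ambient Frobenius $X\mapsto X^q$ on $\PG(8,q^3)$; (iv) by definition~(\ref{defcpi}), $\ocpi = \bar\beta\,\bar{\mathsf c}\,\bar\beta^{-1}$, which on $\Pit$ reads $\cpi = \hat\beta\,\mathsf c_{\pi_0}\,\hat\beta^{-1}$. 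Combining (iii) and (iv) gives $\hat\beta(\lround X\rround_{\pi_0}) = \lround\hat\beta(X)\rround_\pi$ for every $X\in\Pit$, and since $X\mapsto\hat\beta(X)$ is a bijection of $\Pit$, the base case propagates to $\bar\pi$.

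For Part~2, I would re-run the same reduction in $\PG(8,q^6)$. The only new wrinkle is that coordinates now live in $\Fqqqqqq$, where $b_i^{q^3}\neq b_i$ in general but $b_i^{q^6}=b_i$; this is why the formula replaces $\cpi^2$ and $\cpi$ by $\cpi^2\mathsf e=\cpi^5$ and $\cpi\mathsf e=\cpi^4$, using the relation $\cpi^3=\mathsf e$ on the sextic extension (which follows from $\ocpi^3=\mathrm{id}$ on $\PG(2,q^3)$). Since $\hat\beta$ commutes with $\mathsf e$ as well (again having $\Fq$-entries), the transfer runs verbatim. The main obstacle I anticipate is managing the non-commutativity of $\hat\beta$ with $\mathsf c_{\pi_0}$: they act in different ``layers'' ($\hat\beta$ is $\Fqqq$-linear on all of $\PG(8,q^3)$ while $\mathsf c_{\pi_0}$ is a Frobenius action on the $\Pit$-coordinates only), and the clean way through is precisely the conjugacy identity $\cpi=\hat\beta\,\mathsf c_{\pi_0}\,\hat\beta^{-1}$ combined with $\hat\beta$ commuting with the ambient Frobenius.
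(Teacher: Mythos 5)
Your proof is correct, and it shares the paper's overall skeleton --- reduce to the reference subplane $\bar\pi_0=\PG(2,q)$, work in the $A_i$-coordinates of Section~\ref{sec-ai}, and use exactly the conjugacy identities (\ref{eqn-1}) and (\ref{eqn-2}) --- but the mechanism at the two decisive points is genuinely different. For identifying the maximal planes of $\VB\star$, the paper observes that $\Bo{\pi_0}$ is a scroll ruled by identity homographies and appeals to the uniqueness of scroll-extensions (a homography is determined by a quadrangle) to conclude that the relevant system of $\VB\star$ \emph{is} the scroll-extension, whose planes are then read off from (\ref{eqn-1}); you instead parametrise the extended system directly as the Segre-product planes $\pi'_b=\{(b_ic_j)\}$, locate $\pi'_b\cap\Pit$ at $c=(\mathsf a_0,\mathsf a_1,\mathsf a_2)$, and verify that $X$, $(X^{\mathsf c^2})^q$ and $(X^{\mathsf c})^{q^2}$ are the points of $\pi'_b$ with $c$ equal to $(\mathsf a_0,\mathsf a_1,\mathsf a_2)$ and its two Frobenius conjugates, which span because the matrix $(\mathsf a_j^{q^i})$ is invertible (e.g.\ since $\mathsf a_0+\mathsf a_1x+\mathsf a_2x^2=-(x-\tau^q)(x-\tau^{q^2})$ vanishes at $\tau^q$ and $\tau^{q^2}$ but not at $\tau$). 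Second, where the paper disposes of a general $\bar\pi$ with a bare ``without loss of generality'', you carry out the transfer explicitly via the induced Bose collineation $\hat\beta$ and its commutation with the ambient Frobenius, with multiplication by $\tau$, and with $\mathsf c_{\pi_0}$ up to conjugacy; this is a real gain in care, since the asserted formula involves $\mathsf c_\pi$ as defined in (\ref{defcpi}) and one does need your identity $\hat\beta(\lround X\rround_{\pi_0})=\lround \hat\beta(X)\rround_{\pi}$ to see that it transforms correctly under the change of coordinates. What the paper's route buys in exchange is reusability: the scroll-extension formalism of Section~\ref{sec3.4} is applied again to $\Fq$-conics in Theorem~\ref{conic-subplane-B}, whereas your direct Segre-product verification is more elementary but specific to this theorem.
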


\begin{proof}
We first show that the planes of $\Bo{\pi}$ form a scroll. 
Without loss of generality, let $\bar\pi=\PG(2,q)=\{(x,y,z)\st x,y,z\in\Fq, \ \textup{not all }0\}$. 
In the Bose representation, $\pi=\{xA_0+yA_1+zA_2\st x,y,z\in \Fq, \ \textup{not all }0\}$ is an $\Fq$-subplane of the transversal plane $\Pit$.
For a point
$X=xA_0+yA_1+zA_2\in\pi$, (so $x,y,z\in\Fq$) we have  $\Bo{X}=\langle \ X,\ X^q,\ X^{q^2}\ \rangle\cap\PG(8,q)$, that is, $$\Bo{X}\star=\langle \   xA_0+yA_1+zA_2,\ xA_0^q+yA_1^q+zA_2^q,\ 
 xA_0^{q^2}+yA_1^{q^2}+zA_2^{q^2} \ \rangle. 
$$
So the planes of $\Bo{\pi}$ form a scroll denoted $\scroll_\pi$, where the associated homographies are the identity.

By Theorem~\ref{pi0-Bose}, in $\PG(8,q)$, the planes of $\Bo{\pi}$ form one system of maximal spaces of a Segre variety $\S_{2;2}$, denoted  by $\V(\Bo{\pi})$.  The variety-extension of $\VB$ to $\VB\star$ in $\PG(8,q^3)$ is also a Segre   variety $\S_{2;2}$. Hence $\VB\star$ consists of two systems  of maximal subspaces, each ruled by a homography, that is, each system of planes of $\VB\star$ forms a scroll.
 As a homography is uniquely determined by a quadrangle, 
 a scroll has a unique scroll-extension. Hence the Segre variety $\VB\star$  has one system of maximal subspaces  being the same set of planes as the scroll-extension of $\scroll_\pi$. 
 
 We now determine the planes of the scroll-extension of $\scroll_\pi$  to 
 $\PG(8,q^3)$. As the homographies of $\scroll_\pi$ are the identity, the scroll-extension  is the set of planes  $$\{ \langle \   xA_0+yA_1+zA_2,\ xA_0^q+yA_1^q+zA_2^q,\ 
 xA_0^{q^2}+yA_1^{q^2}+zA_2^{q^2} \,\rangle\st x,y,z\in\Fqqq, \textup{ not all }0\}.
$$
Using the calculations from (\ref{eqn-1}) in Section~\ref{sec-conj}, this is the set of planes $$ \{\langle\, X,\ (\conjBsq{X})^q,\ (\conjB{X})^{q^2}\,\rangle\st X\in\Gamma\}.$$
Hence the Segre variety $\VB\star$ has as one system of maximal spaces the planes $\{\lround X\rround_\pi\st X\in \Gamma\}$, with $\lround X\rround_\pi=\langle\, X,\ (\conjBsq{X})^q,\ (\conjB{X})^{q^2}\,\rangle$, proving part 1. 

The proof of part 2 is similar. The scroll-extension  of $\scroll_\pi$ 
 to $\PG(8,q^6)$ is the set of planes $$ \{\langle \   xA_0+yA_1+zA_2,\ xA_0^q+yA_1^q+zA_2^q,\ 
 xA_0^{q^2}+yA_1^{q^2}+zA_2^{q^2} \,\rangle
\st x,y,z\in\Fqqqqqq\textup{ not all }0\}.$$
Using the calculations from  (\ref{eqn-2}) in Section~\ref{sec-conj}, this is the set of planes $$
\{ \langle\, X, (X^{{\mathsf c}^2_\pi\mathsf e})^q, (X^{\mathsf c_\pi\mathsf e})^{q^2}\,\rangle\st X\in\Pit\blackstar\}.$$
Hence the   Segre variety  $\VB\blackstar$ has as one system of maximal spaces the planes  $\{\lround X\rround_\pi\st X\in\Pit\blackstar\}$ with $\lround X\rround_\pi= \langle\, X, (X^{{\mathsf c}^2_\pi\mathsf e})^q, (X^{\mathsf c_\pi\mathsf e})^{q^2}\,\rangle$, proving part 2. 
\end{proof}

 Note that for a point $X\in \pi$,  $\conjB{X}=X$ and  $\lround X\rround_\pi=\Bo{X}\star$, which  meets $\PG(8,q)$ in a plane $\Bo{X}$.  However, if $X\in\Pit\setminus \pi$, then by Lemma~\ref{lem:hypcong}, the plane $\lround X\rround_\pi$ does not meet any element of  $\{\lbose X\rbose\st X\in\Pit\}$. As the planes in the set $\{\lbose X\rbose\st X\in\Pit\}$ partition the points of $\PG(8,q)$, it follows that $\lround X\rround_\pi$ is disjoint from $\PG(8,q)$ for  $X\in\Pit\setminus \pi$.

We use Theorem~\ref{subplane-star-3} to look at an $\Fq$-subline $\bar b$, and determine the planes of the  extension of the 2-regulus $\Bo{b}$ to $\PG(8,q^3)$ and $\PG(8,q^6)$.

\begin{corollary}\Label{subline-star-3} Let $\bar b$ be an $\Fq$-subline  lying on the line $\bar {\ell_b}$ of $\PG(2,q^3)$. In the Bose representation, let ${\mathsf c_b}$ be the  collineation of order 3 acting on the points of $\ell_b$ which fixes $b$ pointwise as defined in (\ref{defcb}). The 2-regulus  $\Bo{b}$ can be extended to a unique 2-regulus of 
 $\PG(8,q^3)$ with  planes    $\{ \lround X\rround_b\st X\in\ell_b\}$; 
 and to a unique 2-regulus   of $\PG(8,q^6)$ with planes $\{\lround X\rround_b\st X\in\ell^{\blackstar}_b\}$ where 
$$\lround X\rround_b=
 \big\langle\, X, (X^{{\mathsf c}^5_b})^q, (X^{\mathsf c^4_b})^{q^2}\,\big\rangle.$$
 \end{corollary}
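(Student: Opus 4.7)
The plan is to deduce the corollary from Theorem~\ref{subplane-star-3} by embedding $\bar b$ as a line of an $\Fq$-subplane $\bar\pi$ of $\PG(2,q^3)$ and then restricting to the appropriate 5-space. By Theorem~\ref{pi0-Bose} the Bose representation $\Bo{\pi}$ forms one system of maximal planes of a Segre variety $\VB = \S_{2;2}$, and by Theorem~\ref{subline-simple} we have $\Bo{b} = \Bo{\pi}\cap\Pi_b$, where $\Pi_b = \langle \ell_b,\ell_b^q,\ell_b^{q^2}\rangle\cap\PG(8,q)$; concretely $\Bo{b}$ is the system of maximal planes of the $\S_{1;2}$ cut out on $\VB$ by $\Pi_b$.

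To extend to $\PG(8,q^3)$, I would apply Theorem~\ref{subplane-star-3}(1): the Segre variety $\VB\star$ has one system of maximal planes $\{\lround X\rround_\pi \st X\in\Pit\}$, with $\lround X\rround_\pi = \langle X,(X^{\mathsf c_\pi^2})^q,(X^{\mathsf c_\pi})^{q^2}\rangle$. Restricting $X$ to lie on $\ell_b$ produces $q^3+1$ planes; each is visibly contained in the 5-space $\langle \ell_b,\ell_b^q,\ell_b^{q^2}\rangle$ of $\PG(8,q^3)$, and as planes of $\VB\star$ they are pairwise disjoint. Hence they form one system of maximal planes of a Segre variety $\S_{1;2}$, which is the unique 2-regulus extending $\Bo{b}$ to $\PG(8,q^3)$. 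The $\PG(8,q^6)$ claim is then proved identically, using Theorem~\ref{subplane-star-3}(2) and taking $X$ to range over $\ell_b\blackstar$.

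The main step, and the place where care is needed, is the identification $\lround X\rround_\pi = \lround X\rround_b$ for $X\in\ell_b$ (respectively $X\in\ell_b\blackstar$). As noted in Section~\ref{sec-conj}, the restriction of $\mathsf c_\pi$ to $\ell_b$ is the unique order-3 collineation of $\ell_b$ fixing $b$ pointwise, and so coincides with $\mathsf c_b$. Substituting $X^{\mathsf c_\pi^i} = X^{\mathsf c_b^i}$ into the formulas from Theorem~\ref{subplane-star-3} recovers the expression $\lround X\rround_b = \langle X,(X^{\mathsf c_b^5})^q,(X^{\mathsf c_b^4})^{q^2}\rangle$ stated in the corollary (using $\mathsf c_b^5 = \mathsf c_b^2$ and $\mathsf c_b^4 = \mathsf c_b$ in the order-3 case on $\ell_b$, and the full exponents $5,4$ in the order-6 case on $\ell_b\blackstar$). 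This also verifies that the resulting 2-regulus is independent of the choice of ambient subplane $\bar\pi$, completing the argument.
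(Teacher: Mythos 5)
Your proposal is correct and follows essentially the same route as the paper: choose an $\Fq$-subplane $\bar\pi$ having $\bar b$ as a line, observe that $\mathsf c_b$ coincides with $\mathsf c_\pi$ restricted to $\ell_b$ precisely in this situation, and intersect the Segre varieties of Theorem~\ref{subplane-star-3} with the 5-space $\Pi_b=\langle \ell_b,\ell_b^q,\ell_b^{q^2}\rangle$. Your additional remarks (that the planes $\lround X\rround_\pi$ for $X\in\ell_b$ visibly lie in that 5-space, and that the result is independent of the choice of $\bar\pi$) merely make explicit what the paper's terser proof leaves implicit.
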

 
 \begin{proof}
 Note that if $X\in\ell_b$, then $X^{{\mathsf c}^5_b}=X^{{\mathsf c}^2_b}$ and $X^{\mathsf c^4_b}=X^{\mathsf c_b}$. Further, if $X\in b$, then $X^{{\mathsf c}^5_b}=X^{\mathsf c^4_b}=X$.
 If $\pi$ is an $\Fq$-subplane of $\Pit$, then $\mathsf c_b$ coincides with $\mathsf c_\pi$ restricted to $\ell_b$ if and only if $b$ is a line of $\pi$. Hence letting $\pi$ be an  $\Fq$-subplane of $\Pit$ such that $b$ is a line of $\pi$, we can intersect the results of 
 Theorem~\ref{subplane-star-3} with the 5-space $\Pi_b=\langle \ell_b,\ell_b^q,\ell_b^{q^2}\rangle$ to obtain the required result. 
 \end{proof}

   \section{$\Fq$-conics in the Bose representation}\Label{sec:conic}
   
We define an $\Fq$-conic of $\PG(2,q^2)$ to be a non-degenerate conic in an $\Fq$-subplane of $\PG(2,q^2)$. That is, an $\Fq$-conic is projectively equivalent to a set of points in $\PG(2,q)$ that satisfy a non-degenerate homogeneous quadratic equation over $\Fq$.  
We  determine the Bose representation of an $\Fq$-conic $\bar\C$ of $\PG(2,q^3)$.
An $\Fq$-conic  $\bar\C$ of $\PG(2,q^3)$ corresponds to an $\Fq$-conic in the transversal plane $\Pit$ denoted $\C$, and $\Cplus$ denotes the unique $\Fqqq$-conic of $\Pit$ containing $\C$. The quadratic extension of the non-degenerate conic $\Cplus\subset\Pit$ to the extended transversal plane $\Pit\blackstar\cong\PG(2,q^6)$
is  a non-degenerate conic which we denote by $\C^{{\rm \plus\!\plus}}$.

\begin{theorem}\Label{conic-subplane-B}
    Let $\bar \C$ be an $\Fq$-conic in the $\Fq$-subplane $\bar\pi$ of $\PG(2,q^3)$, and consider the Bose representation of $\PG(2,q^3)$ in $\PG(8,q)$. 
\begin{enumerate}
\item 
In $\PG(8,q)$, the planes of  $\Bo{\C}$ form a scroll of $\PG(8,q)$, and the pointset of $\Bo{\C}$ forms a variety $\VC=\V^6_3$. 

\item 
In $\PG(8,q^3)$, the points of the variety  $\VC\star $ coincide with the points on the planes $  \{\lround X\rround_\pi\st X\in\Cplus\},
$  which form a scroll.
\item In $\PG(8,q^6)$,  the  points of the variety  $\VC\blackstar$  coincide with the points on the planes $ \{\lround X\rround_\pi\st X\in \C^{{\rm \plus\!\plus}}  \}$, which form a scroll.

\end{enumerate}
\end{theorem}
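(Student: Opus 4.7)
The plan is to exploit the Segre-variety structure of $\Bo{\pi}$ to embed the conic $\Bo{\C}$ as a $(2,2)$-scroll, then use the scroll-extension machinery of Section~\ref{sec3.4} together with Theorem~\ref{subplane-star-3} to identify the extensions.

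For Part 1, since $\bar\C\subset\bar\pi$, the $q+1$ Bose planes of $\Bo{\C}$ lie within the Segre variety $\V(\Bo{\pi})=\S_{2;2}$ of Theorem~\ref{pi0-Bose}, as a subset of one system $\R'$ of maximal planes. I would pick three planes $\alpha_1,\alpha_2,\alpha_3$ from the complementary system $\R$. Each $\alpha_i$ meets every $\Bo{X}$ in a single point, and the assignment $X\mapsto \Bo{X}\cap\alpha_i$ is the projectivity $\pi\to\alpha_i$ inherent in the Segre correspondence. Under this projectivity the $\Fq$-conic $\C$ maps to a non-degenerate conic $\C_i\subset\alpha_i$, so $\Bo{\C}$ realizes the $(2,2)$-scroll $\scroll(\C_1,\C_2,\C_3)$. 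Lemma~\ref{conic-v63} then yields that its pointset is a variety of dimension $3$ and order $6$, giving $\V(\Bo{\C})=\V^6_3$.

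For Part 2, the scroll $\scroll(\C_1,\C_2,\C_3)$ has, by Section~\ref{sec3.4}, a unique scroll-extension to $\PG(8,q^3)$ obtained by extending each base conic to $\C_i\star$ in $\alpha_i\star$ and extending the ruling homographies. Applying Lemma~\ref{conic-v63} over $\Fqqq$ yields that this extended scroll has pointset a variety of dimension $3$ and order $6$; since the extension is governed by the same defining equations, this pointset coincides with $\V(\Bo{\C})\star$. Each plane of the extended scroll lies in $\V(\Bo{\pi})\star$ because the original planes lay in $\V(\Bo{\pi})$, and by Theorem~\ref{subplane-star-3}(1) the relevant system of maximal planes of $\V(\Bo{\pi})\star$ is precisely $\{\lround Y\rround_\pi \st Y\in\Pit\}$, with $\lround Y\rround_\pi$ meeting $\Pit$ in the unique point $Y$. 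Under the birational parameterization by the base curve, extending $\C$ over $\Fqqq$ yields exactly the $\Fqqq$-conic $\Cplus\subset\Pit$, so the planes of the extended scroll are $\{\lround Y\rround_\pi\st Y\in\Cplus\}$. Part 3 follows by the same argument applied to the further quadratic extension to $\PG(8,q^6)$, using Theorem~\ref{subplane-star-3}(2) and noting that the extension of $\C$ in $\Pit\blackstar$ is precisely $\C^{{\rm \plus\!\plus}}$.

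The main obstacle will be rigorously matching the abstract scroll-extension of Section~\ref{sec3.4} with the maximal-plane parameterization $\{\lround Y\rround_\pi\}$ supplied by Theorem~\ref{subplane-star-3}: one must verify that the birational extension of the base-curve parameterization, after passing from $\Fq$ to $\Fqqq$ (and then to $\Fqqqqqq$), produces exactly the set $\Cplus$ (respectively $\C^{{\rm \plus\!\plus}}$), and that each plane of the extended scroll is recovered as $\lround Y\rround_\pi$ for the unique corresponding point $Y$. This bookkeeping of parameterizations, rather than any new geometric input, is the heart of the argument, since Lemma~\ref{conic-v63} and Theorem~\ref{subplane-star-3} supply the remaining ingredients.
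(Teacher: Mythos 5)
Your Part 1 is essentially the paper's argument (the paper gets the scroll structure simply by noting $\Bo{\C}\subset\Bo{\pi}$ and that the latter is a scroll, then cites Lemma~\ref{conic-v63}), so that part is fine. The problem is in Part 2, at the step ``since the extension is governed by the same defining equations, this pointset coincides with $\V(\Bo{\C})\star$.'' This is exactly the point that cannot be waved through. A scroll-extension is defined \emph{parametrically} (extend the birational maps and homographies), whereas the variety-extension $\VC\star$ is defined by extending a specific set of defining forms of $\VC$; the paper's own subsection on variety-extension conventions (Examples 1 and 2 there) exists precisely to show that objects with the same pointset over $\Fq$ can have different extensions, so equality of the parametric extension with the equation-defined extension is a claim that must be proved, not asserted. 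You identify ``bookkeeping of parameterizations'' as the heart of the argument, but the real difficulty is this equations-versus-parameterization issue, and your proposal supplies no mechanism for it.

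The paper closes this gap by pinning down which equations get extended: it writes $\VC$ as the intersection of nine quadrics, namely the three quadrics defining $\VCplus$ (from Theorem~\ref{Fqqq-conic-Bose}) and the six quadrics defining the Segre variety $\VB$ (from Theorem~\ref{pi0-Bose}), so that $\VC\star=\VCplus\star\cap\VB\star$ by definition of variety-extension. It then describes each factor as a union of T-planes --- $\VCplus\star$ via Theorem~\ref{Fqqq-conic-Bose}(2) as the points on $\{\langle X,Y^q,Z^{q^2}\rangle\st X,Y,Z\in\Cplus\}$, and $\VB\star$ via Theorem~\ref{subplane-star-3} as the points on $\{\lround X\rround_\pi\st X\in\Pit\}$ --- and uses Lemma~\ref{lem:hypcong} (two T-planes meet in at most a T-line) to conclude that the intersection of the two unions is the union of the common planes, i.e.\ $\{\lround X\rround_\pi\st X\in\Cplus\}$. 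Your proposal never invokes $\VCplus\star$, Theorem~\ref{Fqqq-conic-Bose}, or the T-plane intersection lemma, and these are the ingredients that actually force the answer. To repair your route you would have to exhibit defining equations for $\VC$ and prove their extended zero locus is exactly your extended scroll, which in effect reproduces the paper's argument.
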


\begin{proof}
Let $\C$ be an $\Fq$-conic in an $\Fq$-subplane $\pi$ of $\Pit$, so $\C=\pi\cap\Cplus$. 
By definition, in $\PG(8,q)$, $\Bo{\C}=\{\langle X,X^q,X^{q^2}\rangle \cap\PG(8,q)\st X\in\C\}$, $\Bo{\pi}=\{\langle X,X^q,X^{q^2}\rangle\cap\PG(8,q)\st X\in\pi\}$ and $\Bo{\Cplus}=\{\langle X,X^q,X^{q^2}\rangle\cap\PG(8,q)\st X\in\Cplus\}$, so 
\begin{eqnarray}\label{eqn:c1}
\Bo{\C}=\Bo{\Cplus}\cap \Bo{\pi}.
\end{eqnarray}
By Theorem~\ref{pi0-Bose}, the planes of  $\Bo{\pi}$ form one system of maximal subspaces of a Segre variety $\S_{2;2}$, and so form a scroll. As the planes of  $\Bo{\C}$ are a subset of the planes of $\Bo{\pi}$, the  planes of $\Bo{\C}$ form  a scroll, ruled by the same homography as for the scroll $\Bo{\pi}$. 
Hence by Lemma~\ref{conic-v63}, the pointset of $\Bo{\C}$ forms a variety $\V^6_3$.

To prove part 2, we first need to describe this variety $\V^6_3$ in more detail. 
By Theorem~\ref{Fqqq-conic-Bose}, the pointset of $\Bo{\Cplus}$ forms a variety $\VCplus=\Q_1\cap\Q_2\cap\Q_3$ where $\Q_i$ is a quadric   with homogenous equation $f_i=0$ of degree two over $\Fq$, $i=1,2,3$.
  By \cite{HT-new}, a Segre variety $\S_{2;2}$ is the intersection of six quadrics, so by Theorem~\ref{pi0-Bose}, the pointset of $\Bo{\pi}$ forms a variety $\VB=\Q_4\cap\cdots\cap\Q_9$ where $\Q_i$ is a quadric with homogenous equation $f_i=0$ of degree two over $\Fq$, $i=4,\ldots,9$. So by (\ref{eqn:c1}), the pointset of $\Bo{\C}$ coincides with the pointset of  a variety $\VC$ which is the intersection of nine quadrics, namely $
  \VC=(\Q_1\cap\Q_2\cap\Q_3)\cap(\Q_4\cap\cdots\cap\Q_9).
$

  The variety extension of $\VC$ is $\VC\star=\Qonestar\cap\cdots\cap\Qninestar$, so in particular, 
  \begin{eqnarray}\label{eqn:yay}
  \VC\star=\VCplus\star\cap\VB\star.
  \end{eqnarray}

We now determine the points of $\VC\star$. 
By Theorem~\ref{Fqqq-conic-Bose}, the points of  $\VCplus\star$ are the points of $\PG(8,q^3)$ on the planes 
\begin{eqnarray}\label{eqn:cplus}
\{\langle X,Y^q,Z^{q^2}\rangle\st X,Y,Z\in\Cplus\}.
\end{eqnarray}
  By Theorem~\ref{subplane-star-3}, the points of  $\VB\star$ are the points  of $\PG(8,q^3)$  on the planes \begin{eqnarray}\label{eqn:B}
  \{\lround X\rround_\pi \st X\in\pi\}.
  \end{eqnarray}
The planes in (\ref{eqn:cplus}) and   (\ref{eqn:B}) are T-planes, so  by  Lemma~\ref{lem:hypcong}, two planes in (\ref{eqn:cplus}) and   (\ref{eqn:B}) either coincide, are disjoint, or meet in a T-point or a T-line.  Thus by (\ref{eqn:yay}),  $\VC\star$ consists of points on the set of planes which are in both (\ref{eqn:cplus}) and   (\ref{eqn:B}). That is,  $\VC\star$ consists of the points  of $\PG(8,q^3)$  on  the planes $\{\lround X\rround_\pi\st X\in\Cplus\}$.

 By Theorem~\ref{subplane-star-3}, the points of the variety-extension $\VB\star$ 
 are one system of maximal subspaces of a Segre variety and so form a scroll.
 As the planes of  $\VC\star$ are a subset of the planes of $\VB\star$, the  planes of $\VC\star$ form  a scroll, ruled by the same homography as for the scroll $\VB\star$. This completes the proof of part 2. Part 3 is similar.
\end{proof}


\bigskip\bigskip

{\bfseries Author information}

S.G. Barwick. School of Mathematical Sciences, University of Adelaide, Adelaide, 5005, Australia.
susan.barwick@adelaide.edu.au

W.-A. Jackson. School of Mathematical Sciences, University of Adelaide, Adelaide, 5005, Australia.
wen.jackson@adelaide.edu.au

P. Wild. Royal Holloway, University of London, TW20 0EX, UK. peterrwild@gmail.com

\end{document}